\newtheorem{theorem}{Theorem}[section]
\newtheorem{proposition}[theorem]{Proposition}
\newtheorem{corollary}[theorem]{Corollary}
\newtheorem{remark}[theorem]{Remark}
\newtheorem{final remark}[theorem]{Final Remark}
\newtheorem{definition}[theorem]{Definition}
\begin{document}
	
\title{An anisotropic summability and mixed sequences}
\author{Jamilson R. Campos, Renato Macedo\thanks{\textcolor{black}{Partially supported by Capes}} \, and Joedson Santos\thanks{\textcolor{black}{Supported by CNPq and by Grant 2019/0014 of Para\'{\i}ba State Research Foundation (FAPESQ)}.\newline 2020 Mathematics Subject Classification: 46B45, 47B10, 47L20, 47B37.\newline Keywords: Operator ideals, summing operators, sequence spaces, mixed summable sequences.}}
\date{}
\maketitle
	
\begin{abstract}
	In this paper we define and study a vector-valued sequence space, called the space of anisotropic $(s,q,r)$-summable sequences, that generalizes the classical space of $(s; q)$-mixed sequences (or mixed $(s; q)$-summable sequences). Furthermore, we define two classes of linear operators involving this new space and one of them generalizes the class of $(s; q) $-mixed linear operators due A. Pietsch. Some characterizations, inclusion results and a Pietsch domination-type theorem are presented for these classes. It is worth mentioning that some of these results are new even in the particular cases of mixed summable sequences and mixed summing operators.
\end{abstract}

\section{Introduction}

In the theory of operator ideals, linear operators between Banach spaces that improve series convergence, which in general can be characterized by the transformation of vector-valued sequences, has an outstanding role. The best-known example of these classes is the ideal of absolutely $p$-summing linear operators, widely studied in the excellent reference \cite{djt}.

Among the classic examples of vector-valued sequence spaces that are part of the scope of this theory, the space of  $(s;q)$-mixed sequences in $X$ (also called mixed $(s; q)$-summable sequences), where $X$ is a Banach space, appears in several works of Defant and Floret (see \cite{DF}), Pietsch (see \cite{Apie}), and Matos (see \cite{mario, M.C.}). Given $0< q \le s \le \infty$ and defining $s(q)'$ by $1/{s(q)'} + 1/{s} = 1/{q}$, we say that a sequence $(x_{j})_{j=1}^{\infty}$ in $X$ is mixed $(s;q)$-summable if 
\begin{equation}\label{defmix}
	x_{j} = \tau_{j}y_{j}\ \forall j \in \mathbb{N}, \text{ with } (\tau_{j})_{j=1}^{\infty} \in \ell_{s(q)'} \text{ and } (y_{j})_{j=1}^{\infty} \in \ell^{w}_{s}(X),
\end{equation} 
where $\ell_{s(q)'}(X)$ and $\ell_{s}^{w}(X)$ denote the spaces of absolutely $s(q)'$-summable sequences and weakly $s$-summable sequences, respectively. The space of all mixed $(s;q)$-summable sequences of $X$ is denoted by $\ell_{m(s;q)}(X)$ and the expression  \[\|(x_{j})_{j=1}^{\infty}\|_{m(s;q)} = \inf \|(\tau_{j})_{j=1}^{\infty}\|_{s(q)'}\|(y_{j})_{j=1}^{\infty}\|_{w,s},\] 
where the infimum is considered for all possible representations as in \eqref{defmix}, defines a norm in $\ell_{m(s;q)}(X)$. In \cite[Proposition 16.4.4.]{DF} it is proved that this space satisfies the following inclusions
\begin{equation*}
	\ell_{q}(X) \subset \ell_{m(s;q)}(X) \subset \ell_{q}^{w}(X)
\end{equation*}
and this paper also presents a study of the mixed operators defined by Pietsch, which are called $(m(s;q);p)$-summing operators. These operators are defined by the following property: A continuous linear operator between Banach spaces $T:U\rightarrow X$ is $(m(s;q);p)$-summing if $(T(x_{j}))_{j=1}^{\infty} \in \ell_{m(s;q)}(X)$, whenever $(x_{j})_{j=1}^{\infty} \in \ell_{p}^{w}(U)$. Also, from \cite[Theorems 20.1.1 and 20.1.4]{DF}, the following sentences are equivalent:
\begin{enumerate}
	\item[(i)]  $T$ is $(m(s;q);p)$-summing;	
	\item[(ii)] There is a constant $C>0$ such that
	$$\|T(x_{j})_{j=1}^{m}\|_{m(s;q)} \leq C\|(x_{j})_{j=1}^{m}\|_{w,p},$$
	for all $m \in \mathbb{N}$ and $x_{1},\ldots,x_{m} \in U$;	
	\item[(iii)] There is a constant $C>0$ such that
	 $$\left(\sum_{i=1}^{n} \left( \sum_{k=1}^{m}|x_{k}^{*}(T(x_{i}))|^{s}\right)^{\frac{q}{s}} \right)^{\frac{1}{q}} \leq C \|(x_{i})_{i=1}^{m}\|_{w,p} \|(x^{*}_{k})_{k=1}^{n}\|_{s},$$
	 for all $m,n \in \mathbb{N}$, $x^{*}_{1},\ldots,x^{*}_{m} \in X^{*}$ and $x_{1},\ldots,x_{n} \in U$.	
\end{enumerate}

Observing the above characterization and bearing in mind all that is known about the theory of summing operators, a natural question arises: If we consider $\|(x^{*}_{k})_{k=1}^{n}\|_{s} \leq 1$, what is the relationship between the left side of the inequality in (iii) and the norm $\|\cdot\|_{m(q;s)}$? The search for an answer to this question was one of our main motivations for this work.

In Section 2 we define the space of all anisotropic $(s,q,r)$-summable sequences, denoted by $\ell_{(s,q,r)}^{A}(X)$, and give a norm that makes it a Banach space. Some inclusion of this new space with the classical sequence spaces are presented, which will motivate the definition of two operator spaces in the subsequent section. Here we answer the question left above proving that the space of mixed $(s; q)$-summable sequences is a particular case of the space of anisotropic $(s,q,r)$-summable sequences, for a certain choice of parameters. Also, we present a Dvoretzky-Rogers-type result for $\ell_{(s,q,r)}^{A}(X)$ and show that the correspondence $X \mapsto \ell_{(s,q,r)} ^ {A}(X)$ defines a sequence class (see \cite{G.R}).

In Section 3 we define classes of anisotropic summing operators and, with the abstract approach of \cite{G.R}, study them from the point of view of the theory of operator ideals. We will also present some characterizations, inclusions and equality results involving these classes. The injectivity of ideals is also studied and Pietsch Domination-type Theorems for these classes are presented.

Let us introduce the notation and definitions needed for this paper. We will consider $X,Y$ and $U$ Banach spaces over $\mathbb{K} = \mathbb{R}$ or $\mathbb{C}$. The closed unit ball of $X$ is denoted by $B_{X}$ and and its topological dual by $X^{*}$. The space of all $X$-valued sequences will be denoted by $X^{\mathbb{N}}$. Given $x \in X$ we will denote by $x\cdot e_{j}$ the sequence whose the $j$-th entry is $x$ and all other entries are zero. If $X= \mathbb{K}$ and $x=1$ we denote $1\cdot e_{j}$ only by $e_{j}$. The symbol $X \stackrel{\textrm{1}}{\hookrightarrow} Y$ means that $X$ is a linear subspace of $Y$ and $\|x\|_{Y} \leq \|x\|_{X}$, for every $x \in X$. By $\mathcal{L}(X;Y)$ we denote the Banach space of all continuous linear operators $T: X \rightarrow Y$ endowed with the usual sup norm. By $\Pi_{q,p}$ we denote the ideal of absolutely $(q;p)$-summing linear operators and if $p = q$ we simply write $\Pi_{p}$. We use the standard notation of the theory of operator ideals as in \cite{djt}. Given the sequence spaces $E^\mathbb{N}$ and $F^\mathbb{N}$ and an operator $u \in \mathcal{L}(E;F)$, we define the (linear) operator induced by $u$ by $\widehat{u}: E^\mathbb{N} \rightarrow F^\mathbb{N},  \widehat{u}((x_{j})_{j=1}^{\infty}) = (u(x_{j}))_{j=1}^{\infty}$. 

We present the classical sequence spaces we shall work with:

\noindent $\bullet$ $\ell_\infty(X)$ = bounded $X$-valued sequences with the sup norm.\\
$\bullet$ $c_{00}(X)$ = eventually null $X$-valued sequences with the sup norm.\\
$\bullet$ $\ell_p(X) $ = absolutely $p$-summable $X$-valued sequences with the usual norm $\|\cdot\|_p$.\\
$\bullet$ $\ell_p^w(X)$ = weakly $p$-summable $X$-valued sequences with the norm
$$\|(x_j)_{j=1}^\infty\|_{w,p} = \sup_{x^* \in B_{X^*}}\|(x^*(x_j))_{j=1}^\infty\|_p. $$

\section{Anisotropic summable sequences}

We start this section introducing a family of sequence spaces, the spaces of anisotropic  $(s,q,r)$-summable sequences, denoted by $\ell^{A}_{(s,q,r)} (X)$. In what follows, $1 \leq s,r,q < \infty$ are real numbers and $X$ is a Banach space.

\begin{definition}  \rm
	Let $q < s$ and $r \leq s$ be real numbers. A sequence $(x_{j})_{j=1}^\infty \in X^{%
		\mathbb{N}}$ is said to be \textit{anisotropic $(s,q,r)$-summable} if 
	\begin{equation*} 
		((x^*_{k}(x_{j}))_{k=1}^\infty)_{j=1}^\infty \in \ell_{q}(\ell_{s}),\ \
		\mathrm{whenever\ \ } (x^*_{k})_{k=1}^\infty \in \ell_r(X^{*}),
w	\end{equation*}
	that is,
	\begin{equation}\label{1}
		\sum_{j=1}^{\infty}\left(\sum_{k=1}^{\infty}|x^{*}_{k}(x_{j})|^{s} \right)^{\frac{q}{s}} < \infty,\ \
		\mathrm{whenever\ \ } (x^*_{k})_{k=1}^\infty \in \ell_r(X^{*}).  
	\end{equation}
\end{definition}

\begin{remark}\label{R10}\rm 
Straightforward calculations show that if $s < r$ then $\ell^{A}_{(s,q,r)}(X)= \{0\}$ and if $s \le q$ we obtain  $\ell^{A }_{(s,q,r)}(X)= \ell^{w}_{q}(X)$. This justifies the choices on the parameters in above definition.
\end{remark}

The expression
\begin{equation*}
	\Vert (x_{j})_{j=1}^{\infty }\Vert _{A(s,q,r)}:=\sup_{(x_{k}^{\ast
		})_{k=1}^{\infty }\in B_{\ell _{r}(X^{\ast })}}\left( \sum_{j=1}^{\infty
	}\left( \sum_{k=1}^{\infty }|x_{k}^{\ast }(x_{j})|^{s}\right) ^{{q}/{%
			s}}\right) ^{1/{q}}  
\end{equation*}
defines a norm on the space $\ell_{(s,q,r)}^{\text{A}}(X)$. Indeed, the finiteness of \eqref{1} can be proved using the Closed Graph Theorem and the norm axioms are easily verified. Moreover, using standard calculations and the fact that $\ell_{q}(X) \stackrel{\textrm{1}}{\hookrightarrow} \ell_{\infty}(X)$, for all $q$ and any
Banach space $X$,  we obtain that $\left( \ell_{(s,q,r)}^{\text{A}}(X);\|\cdot\|_{A(s,q,r)}\right) $ is a Banach space.

The space $\ell^{A}_{(s,q,r)}(X)$ can be placed in a
chain with the spaces of absolutely and weakly summable sequences:

\begin{proposition}\label{2} The chain 
	\begin{equation} \label{3}
		\ell_{q}(X) \stackrel{\textrm{1}}{\hookrightarrow}\ell_{(s,q,r)}^{\text{A}}(X)\stackrel{\textrm{1}}{\hookrightarrow} \ell_q^w(X)
	\end{equation}
is valid, for every Banach space $X$. 
\end{proposition}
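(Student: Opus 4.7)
The plan is to prove both inclusions separately, noting that each is essentially a direct computation once the right test is made. I expect both norm inequalities to be sharp consequences of the definition and two standard facts: the monotonicity $\ell_r \stackrel{1}{\hookrightarrow} \ell_s$ for $r\le s$, and that the supremum defining $\|\cdot\|_{A(s,q,r)}$ may be tested on sequences supported at a single index.

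For the left inclusion $\ell_q(X) \stackrel{1}{\hookrightarrow} \ell_{(s,q,r)}^{A}(X)$, I would fix $(x_j)_{j=1}^\infty \in \ell_q(X)$ and an arbitrary $(x_k^*)_{k=1}^\infty \in B_{\ell_r(X^*)}$. From $|x_k^*(x_j)| \le \|x_k^*\|\,\|x_j\|$ and the fact that $r\le s$ implies
\[
\Bigl(\sum_{k=1}^\infty \|x_k^*\|^s\Bigr)^{1/s} \le \Bigl(\sum_{k=1}^\infty \|x_k^*\|^r\Bigr)^{1/r} \le 1,
\]
one obtains $\bigl(\sum_{k=1}^\infty |x_k^*(x_j)|^s\bigr)^{1/s} \le \|x_j\|$ for each $j$. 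Raising to the $q$-th power, summing in $j$, and taking the supremum over $(x_k^*)_{k=1}^\infty \in B_{\ell_r(X^*)}$ yields $\|(x_j)_{j=1}^\infty\|_{A(s,q,r)} \le \|(x_j)_{j=1}^\infty\|_q$, which in particular shows finiteness, hence the inclusion.

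For the right inclusion $\ell_{(s,q,r)}^{A}(X) \stackrel{1}{\hookrightarrow} \ell_q^w(X)$, the idea is to test the anisotropic supremum against a one-term sequence. Given $(x_j)_{j=1}^\infty \in \ell_{(s,q,r)}^{A}(X)$ and any $x^* \in B_{X^*}$, consider $x^* \cdot e_1 = (x^*, 0, 0, \ldots) \in \ell_r(X^*)$, which has norm $\|x^*\| \le 1$. With this choice only the term $k=1$ survives, so
\[
\Bigl(\sum_{j=1}^\infty |x^*(x_j)|^q\Bigr)^{1/q} = \Bigl(\sum_{j=1}^\infty \bigl(\textstyle\sum_{k=1}^\infty |(x^*\cdot e_1)_k(x_j)|^s\bigr)^{q/s}\Bigr)^{1/q} \le \|(x_j)_{j=1}^\infty\|_{A(s,q,r)}.
\]
Taking the supremum over $x^* \in B_{X^*}$ delivers $\|(x_j)_{j=1}^\infty\|_{w,q} \le \|(x_j)_{j=1}^\infty\|_{A(s,q,r)}$, as desired.

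Neither step presents a real obstacle; the proof is essentially mechanical once one sees the right estimate. The only mildly subtle point is recognizing in the first inclusion that the hypothesis $r\le s$ is used exactly to absorb the norms $\|x_k^*\|$ inside the $\ell_s$-sum, and in the second that the flexibility of choosing the dual sequence at a single coordinate reduces the anisotropic norm to the weak-$q$ norm. Both norm inequalities are with constant $1$, which immediately gives the $\stackrel{1}{\hookrightarrow}$ chain.
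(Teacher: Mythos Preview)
Your proof is correct and follows essentially the same approach as the paper's own proof: for the first inclusion you use $|x_k^*(x_j)|\le \|x_k^*\|\,\|x_j\|$ together with $r\le s$ to bound the inner sum, and for the second you test the anisotropic norm against the single-coordinate sequence $x^*\cdot e_1$ with $x^*\in B_{X^*}$, exactly as the paper does.
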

\begin{proof}
	Let $(x_{j})_{j=1}^{\infty} \in \ell_{q}(X)$ and $(x_{k}^{*})_{k=1}^{\infty} \in \ell_{r}(X^{*})$. Using the continuity of $x_k^*$ and that $r \le s$ we obtain
	$$\left( \sum_{j=1}^{\infty
	}\left( \sum_{k=1}^{\infty }|x_{k}^{\ast }(x_{j})|^{s}\right) ^{{q}/{%
			s}}\right) ^{1/{q}}  \leq \|(x_{k}^{*})_{k=1}^{\infty}\|_{r} \cdot \|(x_{j})_{j=1}^{\infty}\|_{q}.$$
	So, $\|(x_{j})_{j=1}^{\infty}\|_{A(s,q,r)} \leq \|(x_{j})_{j=1}^{\infty}\|_{q}$. On the other hand, if  $(x_j)_{j=1}^\infty \in \ell_{(s,q,r)}^{A}(X)$ we take the sequence $(x^{*}_{k})_{k=1}^{\infty} = x^* \cdot e_1\in \ell_r(X^{*})$, where $x^*\in B_{X^{*}}$, and calculate 
	$$\left( \sum_{j=1}^\infty \left| x^*(x_j) \right|^q\right)^\frac{1}{q} = \left( \sum_{j=1}^\infty \left(\sum_{k=1}^\infty |x^*_{k}(x_{j})|^{s}\right)^%
	{q/s}\right)^{1/q}  \leq \|(x_{j})_{j=1}^{\infty}\|_{A(s,q,r)},$$
	from which it follows that $\|(x_j)_{j=1}^\infty\|_{w,q} \leq \|(x_{j})_{j=1}^{\infty}\|_{A(s,q,r)}$. 
	
\end{proof}
\begin{remark}\rm \label{7}
	As $c_{00}(X) \subseteq \ell_{q}(X)$ and $\ell_{q}^{w}(X) \stackrel{\textrm{1}}{\hookrightarrow} \ell_{\infty}(X)$, for every Banach space $X$, using the chain in (\ref{3}) we obtain 
	\begin{equation}\label{incseq}
		c_{00}(X) \subseteq \ell^{A}_{(s,q,r)}(X)  \stackrel{\textrm{1}}{\hookrightarrow} \ell_{\infty}(X).
	\end{equation}
	In addition, it is a simple task to show that space $\ell_{(s,q,r)}^{\text{A}}(X)$ contains an isometric copy of $X$ by the inclusion $x \in X \mapsto x \cdot e_j \in \ell_{(s,q,r)}^{\text{A}}(X)$, for any choice of $j \in \mathbb{N}$. 
\end{remark}

The next proposition establishes the relationship between anisotropic sequence spaces through their indices.

\begin{proposition}
	If $r_{2} \leq r_{1}\leq s_{1} \leq s_{2}$, $q_{1} \leq q_{2} < s_{2}$ and $q_{1} < s_{1}$, then for each Banach space $X$,
	\begin{equation*}
		\ell_{(s_1,q_1,r_1)}^{\text{A}}(X) \overset{1}{\hookrightarrow}
		\ell_{ (s_2,q_2,r_2) }^{\text{A}}(X).
	\end{equation*}
\end{proposition}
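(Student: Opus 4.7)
The plan is to reduce the statement to three elementary monotonicity facts for scalar $\ell_p$-norms:
\begin{enumerate}
\item If $r_{2}\le r_{1}$, then $\ell_{r_{2}}\hookrightarrow\ell_{r_{1}}$ with $\|\cdot\|_{r_{1}}\le\|\cdot\|_{r_{2}}$.
\item If $s_{1}\le s_{2}$, then $\|(c_{k})\|_{s_{2}}\le\|(c_{k})\|_{s_{1}}$.
\item If $q_{1}\le q_{2}$, then $\|(d_{j})\|_{q_{2}}\le\|(d_{j})\|_{q_{1}}$.
\end{enumerate}
Fix $(x_{j})_{j=1}^{\infty}\in\ell^{A}_{(s_{1},q_{1},r_{1})}(X)$ and an arbitrary sequence $(x_{k}^{*})_{k=1}^{\infty}\in B_{\ell_{r_{2}}(X^{*})}$. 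By (1) applied to the scalar sequence $(\|x_{k}^{*}\|)_{k=1}^{\infty}$, this sequence also lies in $B_{\ell_{r_{1}}(X^{*})}$, so it is an admissible test sequence for the norm $\|\cdot\|_{A(s_{1},q_{1},r_{1})}$.

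Next, writing $a_{j}:=\bigl(\sum_{k}|x_{k}^{*}(x_{j})|^{s_{2}}\bigr)^{1/s_{2}}$ and $b_{j}:=\bigl(\sum_{k}|x_{k}^{*}(x_{j})|^{s_{1}}\bigr)^{1/s_{1}}$, fact (2) gives $a_{j}\le b_{j}$ for every $j$. Then fact (3) applied to $(a_{j})_{j=1}^{\infty}$, followed by the pointwise bound $a_{j}\le b_{j}$, yields
\begin{equation*}
\left(\sum_{j=1}^{\infty}a_{j}^{q_{2}}\right)^{1/q_{2}}\le\left(\sum_{j=1}^{\infty}a_{j}^{q_{1}}\right)^{1/q_{1}}\le\left(\sum_{j=1}^{\infty}b_{j}^{q_{1}}\right)^{1/q_{1}}\le\|(x_{j})_{j=1}^{\infty}\|_{A(s_{1},q_{1},r_{1})},
\end{equation*}
where the last inequality is the definition of the $A(s_{1},q_{1},r_{1})$-norm evaluated at the admissible sequence $(x_{k}^{*})_{k=1}^{\infty}$. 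Taking the supremum over all $(x_{k}^{*})_{k=1}^{\infty}\in B_{\ell_{r_{2}}(X^{*})}$ gives $\|(x_{j})_{j=1}^{\infty}\|_{A(s_{2},q_{2},r_{2})}\le\|(x_{j})_{j=1}^{\infty}\|_{A(s_{1},q_{1},r_{1})}$, which is exactly the claimed continuous inclusion with constant one.

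There is no real obstacle; the only thing to watch is the directions of the monotonicity inequalities (three of them, one for each parameter), and the fact that the hypotheses $r_{2}\le r_{1}\le s_{1}\le s_{2}$ and $q_{1}\le q_{2}$ line up precisely so that all three inequalities point the right way. The additional restrictions $q_{1}<s_{1}$ and $q_{2}<s_{2}$ are only needed to ensure that both anisotropic spaces are nontrivially defined in the sense of Remark~\ref{R10}.
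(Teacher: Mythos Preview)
Your proof is correct and follows essentially the same approach as the paper: both arguments reduce to the three monotonicity facts you list (for the inner $\ell_{s}$-norm, the outer $\ell_{q}$-norm, and the inclusion $\ell_{r_{2}}(X^{*})\subseteq\ell_{r_{1}}(X^{*})$), and the paper's proof is simply a more compressed version of your chain of inequalities. Your write-up is in fact slightly cleaner, as you work directly with a test sequence in $B_{\ell_{r_{2}}(X^{*})}$ and explicitly take the supremum at the end.
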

\begin{proof}
	Let $(x_{j})_{j=1}^{\infty} \in \ell_{(s_1,q_1,r_1)}^{\text{A}}(X^*)$ and $(x^{*}_{k})_{k=1}^{\infty} \in \ell_{r_{1}}(X)$. As $s_{1} \leq s_{2}$ and $q_{1} \leq q_{2}$, it follows that
	\begin{equation*}
		\left(\sum_{j=1}^{\infty}\left(\sum_{k=1}^{\infty} |x^{*}_{k}(x_{j})|^{s_{2}}\right)^{\frac{q_{2}}{s_{2}}}\right)^{\frac{1}{q_{2}}} \leq \left(\sum_{j=1}^{\infty} \left(\sum_{k=1}^{\infty} |x^{*}_{k}(x_{j})|^{s_{1}}\right)^{\frac{q_{1}}{s_{1}}}\right)^{\frac{1}{q_{1}}} < \infty
	\end{equation*}
	and as this is also valid for any $(x^{*}_{k})_{k=1}^{\infty} \in \ell_{r_{2}}(X^*)$, since $\ell_{r_{2}}(X^{*}) \subseteq \ell_{r_{1}}(X^{*})$, we have $(x_{j})_{j=1}^{\infty} \in \ell_{(s_2,q_2,r_2)}^{\text{A}}(X)$. The inequality of norms also follows immediately from the above calculation.
\end{proof}

Before we show the conditions under which the space $\ell_{(s,q,r)}^{\text{A}}(X)$ coincides with the other spaces in the chain \eqref{3}, we present some facts.

\begin{remark}\label{remark1}\rm  (a) Let $x\in X$ and $(x^*_k)_{k=1}^\infty \in \ell_r(X^*)$. As $r \leq s$, it follows that $(x^*_k(x))_{k=1}^\infty \in \ell_s$. So, for every $x^*=(x^*_k)_{k=1}^\infty \in \ell_r(X^*)$, the operator 
		\begin{equation*}
			\Psi_{x^*} :X \rightarrow \ell_{s};\ \Psi_{x^*}(x) :=
			(x^*_k(x))_{k=1}^\infty,
		\end{equation*}
		is well-defined, linear and continuous, with $\|\Psi_{x^*}\| \leq
		\left\| (x_k^*)_{k=1}^\infty\right\|_r$.\\
(b) If $\left( x_j \right)_{j=1}^\infty \in \ell_{s}^{w}(X)$, we can calculate, for all $\left( x_k^*\right)_{k=1}^\infty \in \ell_r(X^*)$, 
\begin{align*}
	\left( \sum_{j=1}^\infty \sum_{k=1}^\infty \left| x_k^*(x_j) \right|^s \right)^\frac{1}{s} & = \left( \sum_{k=1}^\infty \left\| x^*_k \right\|^s\sum_{j=1}^\infty \left|\dfrac{x_k^*}{\left\| x^*_k \right\|}(x_j) \right|^s \right)^\frac{1}{s} \\
	& \leq \left\| (x_j)_{j=1}^\infty \right\|_{w,s} \left\| (x^*_k)_{k=1}^\infty \right\|_s\\
	& \leq \left\| (x_j)_{j=1}^\infty \right\|_{w,s} \left\| (x_k^*)_{k=1}^\infty \right\|_r,
\end{align*}		
and therefore
\begin{equation} \label{des09}
	\left( \sum_{j=1}^\infty \sum_{k=1}^\infty \left| x_k^*(x_j) \right|^s \right)^\frac{1}{s} \leq \left\| (x^*_k)_{k=1}^\infty \right\|_r \left\| (x_j)_{j=1}^\infty \right\|_{w,s},~\forall \left( x_k^*\right)_{k=1}^\infty \in \ell_r(X^*).
\end{equation}	
\end{remark}

\begin{theorem}\label{R24} Let $X$ be a Banach space. Then $\ell_{(s,q,r)}^{\text{A}}(X) =\ell_q^w(X)$ if and only if \[\Psi_{x^*} \in \Pi_{q}(X;\ell_{s}), \ \forall\ x^*=(x^*_{k})_{k=1}^\infty \in \ell_r(X^{*}).\]
\end{theorem}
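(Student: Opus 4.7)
The plan is to observe that both implications reduce to unwinding the definitions, using the key identity
\[
\|\Psi_{x^*}(x)\|_{\ell_s} = \left(\sum_{k=1}^\infty |x_k^*(x)|^{s}\right)^{1/s}
\]
for every $x\in X$ and $x^*=(x_k^*)_{k=1}^\infty \in \ell_r(X^*)$. Raising to the $q$-th power and summing over $j$ gives
\[
\sum_{j=1}^\infty \|\Psi_{x^*}(x_j)\|_{\ell_s}^{q} = \sum_{j=1}^\infty \left(\sum_{k=1}^\infty |x_k^*(x_j)|^{s}\right)^{q/s},
\]
so the finiteness condition \eqref{1} is nothing but the statement that $(\Psi_{x^*}(x_j))_{j=1}^\infty \in \ell_q(\ell_s)$.

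For the sufficiency, I fix $x^* \in \ell_r(X^*)$ and take an arbitrary $(x_j)_{j=1}^\infty \in \ell_q^w(X)$. If every such $\Psi_{x^*}$ is $q$-summing, then $(\Psi_{x^*}(x_j))_{j=1}^\infty \in \ell_q(\ell_s)$, which by the displayed identity is exactly the defining property of $\ell_{(s,q,r)}^{A}(X)$ applied to the single sequence $x^*$. Since this holds for every $x^* \in \ell_r(X^*)$, we conclude $(x_j)_{j=1}^\infty \in \ell_{(s,q,r)}^{A}(X)$, which together with the already-established inclusion $\ell_{(s,q,r)}^{A}(X) \hookrightarrow \ell_q^w(X)$ from Proposition \ref{2} gives the equality.

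For the necessity, I assume $\ell_{(s,q,r)}^{A}(X)=\ell_q^w(X)$ and fix $x^*=(x_k^*)_{k=1}^\infty \in \ell_r(X^*)$. Given any $(x_j)_{j=1}^\infty \in \ell_q^w(X)=\ell_{(s,q,r)}^{A}(X)$, the defining summability condition \eqref{1}, in particular for this fixed $x^*$, rewrites via the identity above as $(\Psi_{x^*}(x_j))_{j=1}^\infty \in \ell_q(\ell_s)$. This is precisely the defining condition for $\Psi_{x^*} \in \Pi_q(X;\ell_s)$.

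There is no substantial obstacle here: both directions are essentially translations of the same inequality, and the only non-trivial input is the inclusion $\ell_{(s,q,r)}^{A}(X) \hookrightarrow \ell_q^w(X)$, which is already available from Proposition \ref{2}. If one wishes to record the norm estimate for the $q$-summing constant $\pi_q(\Psi_{x^*})$, it follows automatically from the closed graph theorem applied to $\widehat{\Psi_{x^*}}:\ell_q^w(X)\to \ell_q(\ell_s)$, in the same spirit as the well-definedness argument following the definition of $\|\cdot\|_{A(s,q,r)}$.
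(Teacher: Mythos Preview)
Your proof is correct and follows essentially the same approach as the paper: both directions amount to rewriting the anisotropic summability condition \eqref{1} as $(\Psi_{x^*}(x_j))_{j=1}^\infty \in \ell_q(\ell_s)$ via the identity $\|\Psi_{x^*}(x)\|_{\ell_s}^q = \bigl(\sum_k |x_k^*(x)|^s\bigr)^{q/s}$, and then invoking Proposition~\ref{2} for the reverse inclusion. Your additional remark on obtaining a bound for $\pi_q(\Psi_{x^*})$ via the closed graph theorem is a harmless extra observation not present in the paper.
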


\begin{proof}
	If $(x_{j})_{j=1}^{\infty} \in \ell_q^w(X) = \ell_{(s,q,r)}^{\text{A}}(X)$ then, for all $x^{*} = (x^{*}_{k})_{k=1}^{\infty} \in \ell_{r}(X^{*})$, we have
	$$\left(\sum_{j=1}^{\infty}\|\Psi_{x^*}(x_j)\|_{s}^{q}\right)^{1/{q}} =\left(\sum_{j=1}^\infty \left(\sum_{k=1}^\infty |x^*_{k}(x_{j})|^{s}\right)^%
	{q/s} \right)^%
	{1/q}< \infty.$$
	Thus, $\Psi_{x^*} \in \Pi_{q}(X;\ell_{s})$, for all $x^*=(x^*_{k})_{k=1}^\infty \in \ell_r(X^{*}).$
	
	Reciprocally, suppose that $\Psi_{x^*} \in \Pi_{q}(X;\ell_{s})$ for all $x^*=(x^*_{k})_{k=1}^\infty \in \ell_r(X^{*})$.  If $(x_{j})_{j=1}^\infty \in \ell_q^w(X)$ it follows that $(\Psi_{x^*}(x_j))_{j=1}^\infty=((x^*_{k}(x_{j}))_{k=1}^\infty)_{j=1}^\infty \in \ell_{q}(\ell_{s})$, for all $x^* = (x^{*}_{k})_{k=1}^{\infty} \in \ell_r(X^{*})$. Thus, we have $(x_{j})_{j=1}^\infty \in \ell_{(s,q,r)}^{\text{A}}(X)$ and therefore $\ell^{w}_{q}(X) \subseteq \ell_{(s,q,r)}^{\text{A}}(X)$. From this and Proposition \ref{2} we obtain $\ell^{w}_{q}(X) = \ell_{(s,q,r)}^{\text{A}}(X)$.
\end{proof}

The next result is a Dvoretzky-Rogers-type theorem for the space of all anisotropic $(s,q,r)$-summable sequences.

\begin{theorem}\label{TDR}
	A Banach space $X$ has finite dimension if and only if $\ell_{(s,q,r)}^{\text{A}}(X)=\ell_q(X)$.
\end{theorem}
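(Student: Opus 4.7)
The ``if'' direction is immediate: in finite-dimensional $X$ the weak and norm topologies agree on bounded sets, so $\ell_q^w(X)=\ell_q(X)$ with equivalent norms, and the sandwich $\ell_q(X)\hookrightarrow\ell_{(s,q,r)}^{A}(X)\hookrightarrow\ell_q^w(X)$ from Proposition \ref{2} forces all three spaces to coincide.

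For the converse I would argue by contradiction. Assuming the set equality $\ell_{(s,q,r)}^{A}(X)=\ell_q(X)$, since both are Banach spaces and the inclusion from Proposition \ref{2} is continuous, the open mapping theorem supplies $C>0$ with $\|(x_j)\|_{q}\leq C\,\|(x_j)\|_{A(s,q,r)}$ for every sequence. The heart of the argument is the finite-sequence estimate
$$\|(x_j)_{j=1}^{n}\|_{A(s,q,r)}\leq n^{1/q-1/s}\,\|(x_j)_{j=1}^{n}\|_{w,s},$$
which I would obtain by setting $a_j:=\sum_{k}|x_k^{*}(x_j)|^{s}$, using Remark \ref{remark1}(b) to get $\sum_{j=1}^{n}a_j\leq\|(x_k^{*})\|_{r}^{s}\,\|(x_j)_{j=1}^{n}\|_{w,s}^{s}$, and then applying H\"older's inequality with conjugate exponents $s/q$ and $s/(s-q)$ (valid because $q<s$) to deduce $(\sum_{j}a_{j}^{q/s})^{1/q}\leq n^{1/q-1/s}(\sum_{j}a_j)^{1/s}$; the claim follows on taking the supremum over $(x_{k}^{*})\in B_{\ell_{r}(X^{*})}$.

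Combining this estimate with the norm equivalence gives $\|(x_j)_{j=1}^{n}\|_{q}\leq Cn^{1/q-1/s}\,\|(x_j)_{j=1}^{n}\|_{w,s}$ for every finite sequence. The Dvoretzky--Rogers lemma (see \cite{djt}) provides, in any infinite-dimensional $X$ and each $n$, vectors $x_{1},\ldots,x_{n}\in X$ with $\|x_{j}\|\geq 1/2$ and $\|(x_j)_{j=1}^{n}\|_{w,2}\leq c$ for an absolute constant $c$. Converting to weak-$s$ by the scalar H\"older bound $\|(x_j)_{j=1}^{n}\|_{w,s}\leq c\,n^{\max(0,\,1/s-1/2)}$ (trivial when $s\geq 2$) and plugging in yields $2^{-1}n^{1/q}\leq Cc\,n^{1/q-\min(1/s,1/2)}$, i.e.\ $n^{\min(1/s,1/2)}\leq 2Cc$, which fails for $n$ sufficiently large; this contradiction forces $\dim X<\infty$. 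The main obstacle I anticipate is the key estimate itself: its rate $n^{1/q-1/s}$ is exactly what is needed to balance the DR lemma, so any cruder bound on $\|\cdot\|_{A(s,q,r)}$ (for instance, the tautological $\|\cdot\|_{A(s,q,r)}\leq\|\cdot\|_{q}$ from Proposition \ref{2}) would make the argument vacuous.
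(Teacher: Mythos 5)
Your proof is correct, but it takes a genuinely different route from the paper's. The paper argues constructively: it fixes a sequence $(x_j)_{j=1}^\infty\in\ell_q^w(X)\setminus\ell_q(X)$ supplied by the classical Dvoretzky--Rogers theorem, takes $t$ with $1/q=1/s+1/t$, shows by essentially the same H\"older step you use (via Remark~\ref{remark1}(b)) that $(\lambda_jx_j)_{j=1}^\infty\in\ell^{A}_{(s,q,r)}(X)$ for every $(\lambda_j)_{j=1}^\infty\in\ell_t$, and then uses the duality between $\ell_{t/q}$ and $\ell_{s/q}$ to select a specific $(\lambda_j)_{j=1}^\infty\in\ell_t$ with $\sum_j|\lambda_j|^q\|x_j\|^q=\infty$, thereby exhibiting an explicit element of $\ell^{A}_{(s,q,r)}(X)\setminus\ell_q(X)$ with no norm comparison between the two spaces. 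You instead argue by contradiction: completeness of both spaces plus the open mapping theorem upgrades the assumed set equality to $\|\cdot\|_q\le C\|\cdot\|_{A(s,q,r)}$; your finite-section estimate $\|(x_j)_{j=1}^{n}\|_{A(s,q,r)}\le n^{1/q-1/s}\|(x_j)_{j=1}^{n}\|_{w,s}$ is correct (it is the finite, $\lambda_j\equiv 1$ version of the paper's H\"older computation), and the exponent bookkeeping against the finite-dimensional Dvoretzky--Rogers lemma, ending in $n^{\min(1/s,1/2)}\le 2Cc$, checks out in both cases $s\ge 2$ and $s<2$. What each approach buys: yours is quantitative, yielding a polynomial rate of divergence between the two norms on $n$-tuples and needing only the finite-dimensional lemma, but it leans on the completeness of $\ell^{A}_{(s,q,r)}(X)$ and the open mapping theorem; the paper's is softer, avoids any automatic-continuity argument, and produces a concrete witness sequence, a construction the authors then reuse verbatim to prove $\Pi_t\subseteq\mathcal{W}^{A}_{(s,q,r;p)}$. (Minor point: your justification of the easy direction via coincidence of topologies is loose --- the standard argument expands in a basis and applies the coordinate functionals --- but the conclusion $\ell_q^w(X)=\ell_q(X)$ for finite-dimensional $X$ is exactly what the paper also invokes without proof.)
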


\begin{proof}
	If $X$ has finite dimension it is clear that $\ell_{(s,q,r)}^{\text{A}}(X)=\ell_q(X)$, since $\ell_{q} ^{w}(X)=\ell_q(X)$. Reciprocally, let  $(x_j)_{j=1}^\infty$ be a sequence in $\ell_{q}^{w}(X)\backslash\ell_q(X)$. As $q<s$, there is a real number $t$ such that $1/q = 1/s + 1/t$ and, for each sequence $(\lambda_j)_{j=1}^\infty\in \ell_t$, the sequence $(\lambda_{j}x_{j})_{j=1}^\infty\in \ell_{(s,q,r)}^{\text{A}}(X)$. Indeed, we have
	\begin{align*}
		\left( \sum_{j=1}^\infty \left( \sum_{k=1}^\infty \left| x_k^*(\lambda_j x_j) \right|^s \right)^\frac{q}{s}\right)^\frac{1}{q} 
		& = \left( \sum_{j=1}^\infty \left| \lambda_j \right|^q \left( \sum_{k=1}^\infty \left| x_k^*(x_j) \right|^s \right)^\frac{q}{s}\right)^\frac{1}{q}\\
		& \leq \left( \sum_{j=1}^\infty \left| \lambda_j \right|^{t}\right)^\frac{1}{t} \left( \sum_{j=1}^\infty \sum_{k=1}^\infty\left| x_k^*(x_j) \right|^s \right)^\frac{1}{s} \\
		& \leq \left\| (\lambda_j)_{j=1}^\infty \right\|_{t} \left\| (x_j)_{j=1}^\infty \right\|_{w,s} \left\| (x_k^*)_{k=1}^\infty \right\|_r,
	\end{align*}
	whenever $\left( x_k^*\right)_{k=1}^\infty \in \ell_r(X^*)$, where in the last inequality we use \eqref{des09} and that $(x_j)_{j=1}^\infty \in \ell_{s}^{w}(X)$ since $q<s$.
	
	We notice that if $\sum_{j=1}^\infty \left|a_j\right|\left\|x_j\right\|^q < \infty$, for each $(a_j)_{j=1}^\infty\in \ell_\frac{t}{q}$, we must have $\left(\left\|x_j\right\|^q\right)_{j=1}^\infty\in \ell_\frac{s}{q}$, that is, $(x_j)_{j=1}^\infty\in \ell_s(X)$. Therefore, there is a sequence $(a_j)_{j=1}^\infty\in \ell_\frac{t}{q}$ such that
	\begin{equation}\label{diverge}
		\sum_{j=1}^\infty \left|a_j\right|\left\|x_j\right\|^q=\infty.
	\end{equation}
	Taking $(\lambda_j)_{j=1}^\infty:=\left(|a_j|^{1/q}\right)_{j=1}^\infty \in \ell_t$, by the above calculation, we have $(\lambda_{j}x_{j})_{j=1}^\infty\in \ell_{(s,q,r)}^{\text{A}}(X)$ but (\ref{diverge}) ensures that $(\lambda_{j}x_{j})_{j=1}^\infty\notin \ell_{q}(X)$. So, $(\lambda_{j}x_{j})_{j=1}^\infty\in \ell_{(s,q,r)}^{\text{A}}(X)\backslash\ell_q(X)$.	
\end{proof}

Now we prove one of our main results and this one answers the question left in the introduction of the paper. This result makes use of a characterization for the norm $\left\| \cdot \right\|_{m(s;q)}$ which can be found in \cite[Theorem 1.1]{M.C.}. In order to help the reader, we will show this result first.

\begin{theorem}{\rm \cite[Theorem 1.1]{M.C.}}\label{Maurey}
	For $0 < q < s$ and $(x_{j})_{j=1}^{\infty} \in \ell_{\infty}(X)$ the following sentences are equivalent:
\begin{description}
	\item [(i)] $(x_{j})_{j=1}^{\infty}$ is $m(s,q)$-summable.	
	\item [(ii)] If $W(B_{X^{*}})$ denotes the set of all regular probability measures defined on the $\sigma$-algebra of the Borel subsets of $B_{X^{*}}$, when this set is endowed with the restricted weak star topology of $X^{*}$, it follows that
	$$\left(\left(\int_{B_{X^*}}|x^{*}(x_{j})|^{s} d\mu(x^{*})\right)^{\frac{1}{s}}  \right)_{j=1}^{\infty} \in \ell_{q},$$
	for every $\mu \in W(B_{X^{*}})$. In the case, 
	$$\|(x_{j})_{j=1}^{\infty}\|_{m(s,q)} = \sup_{\mu \in W(B_{X^{*}})} \left\| \left(\left(\int_{B_{X^*}}|x^{*}(x_{j})|^{s} d\mu(x^{*})\right)^{\frac{1}{s}}  \right)_{j=1}^{\infty}\right\|_{q}.$$
\end{description}
\end{theorem}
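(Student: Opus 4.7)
My plan is to prove the equivalence by treating each direction separately, with (i) $\Rightarrow$ (ii) following from a direct Hölder/Fubini computation and (ii) $\Rightarrow$ (i) requiring a Pietsch-style variational argument to extract the mixed factorization from an optimal measure.

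For (i) $\Rightarrow$ (ii), I would take a representation $x_j = \tau_j y_j$ with $(\tau_j) \in \ell_{s(q)'}$ and $(y_j) \in \ell_s^w(X)$. For any $\mu \in W(B_{X^*})$, pulling $|\tau_j|$ out of the integral gives
\begin{equation*}
\left(\int_{B_{X^*}}|x^*(x_j)|^s\,d\mu\right)^{1/s} = |\tau_j|\left(\int_{B_{X^*}}|x^*(y_j)|^s\,d\mu\right)^{1/s}.
\end{equation*}
Applying Hölder's inequality in the index $j$ with exponents $s(q)'$ and $s$ (available since $1/q = 1/s(q)' + 1/s$), together with Fubini and the fact that $\mu$ is a probability measure,
\begin{equation*}
\sum_j \int |x^*(y_j)|^s\,d\mu \leq \sup_{x^*\in B_{X^*}}\sum_j |x^*(y_j)|^s = \|(y_j)\|_{w,s}^s,
\end{equation*}
yields the bound by $\|(\tau_j)\|_{s(q)'}\|(y_j)\|_{w,s}$. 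Taking the supremum over $\mu$ and the infimum over admissible representations gives (ii) and the inequality $\sup_\mu\|\cdot\|_q \le \|(x_j)\|_{m(s,q)}$.

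For (ii) $\Rightarrow$ (i), I would first deal with a finite sequence $(x_j)_{j=1}^n$, setting $M$ to be the finite supremum in (ii). Because each $x^*\mapsto |x^*(x_j)|^s$ is weak-$*$ continuous and bounded on $B_{X^*}$ and $W(B_{X^*})$ is weak-$*$ compact (Banach--Alaoglu), and because $q < s$ makes $t\mapsto t^{q/s}$ concave, the functional $F(\mu):=\sum_{j=1}^n(\int |x^*(x_j)|^s\,d\mu)^{q/s}$ is concave and upper semicontinuous on $W(B_{X^*})$ and attains its maximum at some $\mu_0$, with $F(\mu_0)=M^q$. Writing $A_j:=\int |x^*(x_j)|^s\,d\mu_0$, the first-order optimality condition obtained by differentiating $F((1-t)\mu_0+t\delta_{x^*})$ at $t=0^+$ for arbitrary $x^*\in B_{X^*}$ gives the key inequality
\begin{equation*}
\sum_{j=1}^n \frac{|x^*(x_j)|^s}{A_j^{(s-q)/s}} \leq \sum_{j=1}^n A_j^{q/s} = M^q, \qquad \forall\,x^*\in B_{X^*}.
\end{equation*}
Setting $\tau_j:=A_j^{(s-q)/s^2}$ and $y_j:=x_j/\tau_j$, a direct computation using $1/s(q)'=(s-q)/(sq)$ shows $\|(\tau_j)\|_{s(q)'}^{s(q)'}=\sum_j A_j^{q/s}\le M^q$ and, from the displayed inequality, $\|(y_j)\|_{w,s}^s\le M^q$; multiplying these gives $\|(\tau_j)\|_{s(q)'}\|(y_j)\|_{w,s}\le M^{q/s(q)'+q/s}=M$, so $\|(x_j)\|_{m(s,q)}\le M$.

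The hard part is the variational step: concavity plus first-order optimality at $\mu_0$ must be executed carefully, and some care is needed for degenerate indices where $A_j=0$ (which forces $x^*(x_j)=0$ on the support of $\mu_0$, and is handled by a standard $A_j\mapsto A_j+\varepsilon$ perturbation and limit, or by restricting to the indices where $A_j>0$ and treating the rest separately). To pass from finite to infinite sequences, I would apply the finite case to each truncation $(x_j)_{j=1}^n$ with the common constant $M$, obtaining coefficient sequences $(\tau_j^{(n)})$ uniformly bounded in $\ell_{s(q)'}$, and then extract a weak-$*$ convergent subsequence; a diagonal argument produces coefficients $(\tau_j)$ for the full sequence $(x_j)_{j=1}^\infty$ realizing the bound $\|(x_j)\|_{m(s,q)}\le M$, which combined with the opposite inequality proved in the first step completes the proof.
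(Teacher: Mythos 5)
First, a point of reference: the paper does not prove this statement at all --- Theorem \ref{Maurey} is quoted verbatim from Matos \cite[Theorem 1.1]{M.C.} and is used as a black box in the proof of Theorem \ref{R60} --- so there is no in-paper argument to compare yours against. On its own terms, your proposal follows the classical Maurey--Pietsch route (H\"older/Fubini for the easy direction, a variational argument over the weak-$*$ compact convex set $W(B_{X^{*}})$ for the converse), and the computations check out: with $\tau_j=A_j^{(s-q)/s^2}$ one indeed gets $\tau_j^{s(q)'}=A_j^{q/s}$, the first-order condition at the maximizing measure gives exactly $\|(y_j)_{j}\|_{w,s}^s\le M^q$, and the exponents recombine to give $M$. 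The degenerate indices and the passage from truncations to the full sequence are handled correctly in outline; in particular your diagonal argument survives because $\|x_j\|=\|y_j^{(n)}\|\,\tau_j^{(n)}\le M^{q/s}\tau_j^{(n)}$ keeps the limiting $\tau_j$ bounded away from $0$ whenever $x_j\ne 0$ (though for $q<1$ the space $\ell_{s(q)'}$ need not be a dual Banach space, so ``weak-$*$ convergent subsequence'' should be read as coordinatewise convergence via a diagonal extraction plus Fatou, which you also invoke).

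The one genuine gap is at the start of (ii) $\Rightarrow$ (i) for infinite sequences: hypothesis (ii) only asserts that $\bigl(\bigl(\int_{B_{X^*}}|x^{*}(x_{j})|^{s}d\mu\bigr)^{1/s}\bigr)_{j}\in\ell_q$ for each individual $\mu$, whereas your truncation argument needs the \emph{uniform} bound $M=\sup_{\mu}\|\cdot\|_q<\infty$; otherwise the constants for the truncations may diverge and the scheme concludes nothing. Passing from pointwise finiteness to uniform boundedness is not automatic (the map $\mu\mapsto(\dots)$ is not linear, so the closed graph theorem does not apply directly) and must be proved. It does follow from a convex-combination trick: if $G(\mu):=\sum_{j}\bigl(\int|x^{*}(x_{j})|^{s}d\mu\bigr)^{q/s}$ were unbounded on $W(B_{X^{*}})$, pick $\mu_n$ with $G(\mu_n)\ge n\,2^{nq/s}$ and set $\mu:=\sum_{n}2^{-n}\mu_n\in W(B_{X^{*}})$; since $\int|x^{*}(x_{j})|^{s}d\mu\ge 2^{-n}\int|x^{*}(x_{j})|^{s}d\mu_n$ for every $j$ and $n$, one gets $G(\mu)\ge 2^{-nq/s}G(\mu_n)\ge n$ for all $n$, so $G(\mu)=\infty$, contradicting (ii) for this particular $\mu$. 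With that lemma inserted before the truncation step, your proof is complete.
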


\begin{theorem}\label{R60}
	Let $X$ be a Banach space. If $1\leq q < s < \infty$, then 
	\begin{equation*} 
		\ell_{m(s;q)}(X)= \ell^{A}_{(s,q,s)}(X) \ \text{ and  }
		\left\| (x_j)_{j=1}^\infty \right\|_{m(s;q)} = \left\| (x_j)_{j=1}^\infty \right\|_{A(s,q,s)}.
	\end{equation*} 
\end{theorem}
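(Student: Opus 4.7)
The strategy is to use Theorem \ref{Maurey} as the bridge: it replaces the definition of $\|\cdot\|_{m(s;q)}$ by a supremum of integrals against regular probability measures on $B_{X^*}$, and my job becomes matching that supremum with the supremum over $B_{\ell_s(X^*)}$ appearing in $\|\cdot\|_{A(s,q,s)}$. Both norms are attached to sequences that automatically lie in $\ell_\infty(X)$ (the $m(s;q)$-case by the chain $\ell_{m(s;q)}(X)\subset \ell_q^w(X)\subset \ell_\infty(X)$ and the anisotropic case by Remark \ref{7}), so Theorem \ref{Maurey} is applicable on both sides.

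For the inequality $\|(x_j)\|_{A(s,q,s)}\leq \|(x_j)\|_{m(s;q)}$, I would take $(x_k^*)_{k=1}^\infty\in B_{\ell_s(X^*)}$ and build the discrete measure
\[\mu:=\sum_{k\colon x_k^*\neq 0}\|x_k^*\|^s\,\delta_{x_k^*/\|x_k^*\|}+\Bigl(1-\sum_k\|x_k^*\|^s\Bigr)\delta_{0}\in W(B_{X^*}).\]
A direct computation gives $\int_{B_{X^*}}|x^*(x_j)|^s\,d\mu(x^*)=\sum_k|x_k^*(x_j)|^s$, so Theorem \ref{Maurey} produces
\[\Bigl(\sum_j\bigl(\sum_k|x_k^*(x_j)|^s\bigr)^{q/s}\Bigr)^{1/q}\leq \|(x_j)\|_{m(s;q)},\]
and the supremum over $(x_k^*)\in B_{\ell_s(X^*)}$ delivers the desired inequality.

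For the reverse inequality $\|(x_j)\|_{m(s;q)}\leq \|(x_j)\|_{A(s,q,s)}$, I would first handle a finitely supported probability measure $\mu=\sum_{k=1}^n\alpha_k\delta_{y_k^*}$ with $y_k^*\in B_{X^*}$, $\alpha_k\geq 0$, $\sum\alpha_k=1$. Setting $x_k^*:=\alpha_k^{1/s}y_k^*$ yields $\sum_k\|x_k^*\|^s\leq \sum_k\alpha_k=1$, so $(x_k^*)\in B_{\ell_s(X^*)}$, and $\int|x^*(x_j)|^s\,d\mu=\sum_k|x_k^*(x_j)|^s$, which gives
\[\Bigl(\sum_{j=1}^N\bigl(\int|x^*(x_j)|^s\,d\mu\bigr)^{q/s}\Bigr)^{1/q}\leq \|(x_j)\|_{A(s,q,s)}\]
for every $N$. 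To pass to an arbitrary $\mu\in W(B_{X^*})$, I would invoke the classical weak-$*$ density of the finitely supported probability measures in $W(B_{X^*})$ (Choquet/Krein--Milman type argument applied to the compact convex set $W(B_{X^*})$, whose extreme points are the Dirac masses), choose a net $(\mu_\alpha)$ of such measures with $\mu_\alpha\overset{w^*}{\to}\mu$, and exploit the fact that $x^*\mapsto|x^*(x_j)|^s$ is a bounded $w^*$-continuous function on $B_{X^*}$ for each fixed $j$ to obtain $\int|x^*(x_j)|^s\,d\mu_\alpha\to\int|x^*(x_j)|^s\,d\mu$. Passing the bound through a finite sum on $j$, then letting $N\to\infty$ and taking the supremum over $\mu$, Theorem \ref{Maurey} closes the estimate.

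The only delicate step is the last one: the transfer from finitely supported measures to arbitrary $\mu\in W(B_{X^*})$ via a $w^*$-approximation; everything else is a bookkeeping computation. Since each $|\cdot|^s$-integrand is weak-$*$ continuous on the compact set $B_{X^*}$, this passage is routine, and the equality of norms follows. The same chain of equalities, read without the supremum, also yields set equality $\ell_{m(s;q)}(X)=\ell_{(s,q,s)}^{A}(X)$.
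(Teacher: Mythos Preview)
Your argument is correct. For the inequality $\|\cdot\|_{m(s;q)}\leq\|\cdot\|_{A(s,q,s)}$ you and the paper do exactly the same thing: rescale a finitely supported probability measure $\mu=\sum\alpha_k\delta_{y_k^*}$ to a tuple $(\alpha_k^{1/s}y_k^*)\in B_{\ell_s(X^*)}$, then pass to general $\mu$ by weak-$*$ density of discrete measures in $W(B_{X^*})$ and invoke Theorem~\ref{Maurey}.

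The difference lies in the opposite inequality $\|\cdot\|_{A(s,q,s)}\leq\|\cdot\|_{m(s;q)}$. The paper avoids Theorem~\ref{Maurey} here and works directly from the \emph{defining} factorization $x_j=\lambda_ju_j$ with $(\lambda_j)\in\ell_{s(q)'}$, $(u_j)\in\ell_s^w(X)$: H\"older on the $j$-sum together with the elementary bound \eqref{des09} gives the estimate, and taking the infimum over factorizations recovers $\|\cdot\|_{m(s;q)}$. You instead run Theorem~\ref{Maurey} in reverse, encoding $(x_k^*)\in B_{\ell_s(X^*)}$ as a (countably supported) probability measure via the weights $\|x_k^*\|^s$. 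Your route is more symmetric (the same bridge is used both ways) and needs no separate computation like \eqref{des09}; the paper's route is slightly more self-contained for that half, since it does not require the measure-theoretic characterization at all.
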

\begin{proof}
	Let $\left( x_j \right)_{j=1}^\infty = \left( \lambda_ju_j\right)_{j=1}^\infty \in \ell_{m(s;q)}(X)$, with $(\lambda_j)_{j=1}^\infty \in \ell_{s(q)'}$ and $(u_j)_{j=1}^\infty \in \ell_s^w(X)$. Using the H\"{o}lder's inequality, we have 
	\begin{align*}
		\left( \sum_{j=1}^\infty \left( \sum_{k=1}^\infty \left| x_k^*(x_j) \right|^s \right)^\frac{q}{s}\right)^\frac{1}{q} 
		& = \left( \sum_{j=1}^\infty \left( \sum_{k=1}^\infty \left| x_k^*(\lambda_j u_j) \right|^s \right)^\frac{q}{s}\right)^\frac{1}{q} \\
		& = \left( \sum_{j=1}^\infty \left| \lambda_j \right|^q \left( \sum_{k=1}^\infty \left| x_k^*(u_j) \right|^s \right)^\frac{q}{s}\right)^\frac{1}{q} \\
		& \le \left( \sum_{j=1}^\infty \left| \lambda_j \right|^{s(q)'}\right)^\frac{1}{s(q)'} \left( \sum_{j=1}^\infty \sum_{k=1}^\infty\left| x_k^*(u_j) \right|^s \right)^\frac{1}{s} \\
		& \overset{(\ref{des09})}{\leq} \left\| (\lambda_j)_{j=1}^\infty \right\|_{s(q)'} \left\| (u_j)_{j=1}^\infty \right\|_{w,s} \left\| (x_k^*)_{k=1}^\infty \right\|_s.
	\end{align*}
	So, $\left( x_j \right)_{j=1}^\infty \in \ell_{(s,q,s)}^{A}(X)$ and, as the above calculation is valid for any representation of $(x_j)_{j=1}^\infty \in \ell_{m(s;q)}(X)$, we conclude that 
	\begin{align*}
		\left\| (x_j)_{j=1}^\infty \right\|_{A(s,q,s)}
		& \leq  \|(x_j)_{j=1}^\infty \|_{m(s;q)}.
	\end{align*}
	
	Now let $(x_{j})_{j=1}^{\infty} \in  \ell^{A}_{(s,q,s)}(X)$. By the definition of $\|\cdot\|_{A(s,q,s)}$, for all $(y^{*}_{k})_{k=1}^{\infty} \in \ell_{s}(X^{*})$, we have
	\begin{equation*}
		\left(  \sum_{j=1}^{\infty}\left(\sum_{k=1}^{\infty} \left|   y^{*}_{k}(x_{j}) \right|^{s}   \right)^{\frac{q}{s}}  \right)^{\frac{1}{q}} \leq \|(x_{j})_{j=1}^{\infty}\|_{A(s,q,s)}\|(y^*_{k})_{k=1}^{\infty}\|_{s}
	\end{equation*}
	and, in particular, for any $y^{*}_{1},\ldots,y^{*}_{m} \in B_{X^{*}}$ and $n \in \mathbb{N}$,
	\begin{equation}\label{B00}
		\left(  \sum_{j=1}^{n}\left(\sum_{k=1}^{m} \left|   y^{*}_{k}(x_{j}) \right|^{s}   \right)^{\frac{q}{s}}  \right)^{\frac{1}{q}} \leq \|(x_{j})_{j=1}^{n}\|_{A(s,q,s)}\|(y^*_{k})_{k=1}^{m}\|_{s}.
	\end{equation}
	Let us consider $\mu$ a discrete probability measure over $B_{X^{*}}$. So, there are $x_{1}^{*},\ldots,x_{m}^{*} \in B_{X^*}$ such that $\displaystyle\mu = \sum_{k=1}^{m}v_{k}\delta_{k}$, where $\delta_{k}$ is the Dirac associated with $x_{k}^{*}$ and $v_{k}\geq0$ for each $k \in \{1,\ldots,m\}$. Moreover, $\displaystyle\sum_{k=1}^{m}v_{k} = 1$ and it follows  that
	\begin{equation}\label{B01}
		\int_{B_{X^{*}}}|x^{*}(x_{j})|^{s}d\mu(x^{*}) = \sum_{k=1}^{m}|x^{*}_{k}(x_{j})|^{s}v_{k} = \sum_{k=1}^{m}\left| v_{k}^{\frac{1}{s}}x^{*}_{k}(x_{j})\right| ^{s}, ~\forall j \in \mathbb{N},
	\end{equation} 
	and
	\begin{equation*}
		\left\| \left( v_{k}^{\frac{1}{s}}x^{*}_{k}\right) _{k=1}^{m}\right\|_{s} = \left(  \sum_{k=1}^{m} \left\| v_{k}^{\frac{1}{s}}x^{*}_{k} \right\|^{s}\right)^{\frac{1}{s}} 
		=\left(  \sum_{k=1}^{m} v_{k} \left\| x^{*}_{k} \right\|^{s}\right)^{\frac{1}{s}}
		\leq \left(  \sum_{k=1}^{m} v_{k} \right)^{\frac{1}{s}} = 1.
	\end{equation*}
	Therefore, by $(\ref{B00})$ and $(\ref{B01})$, it follows that
	\begin{align*}
		\left(\sum_{j=1}^{n}\left(\int_{B_{X^{*}}} |x^{*}(x_{j})|^{s}d\mu(x^{*}) \right)^{\frac{q}{s}}  \right)^{\frac{1}{q}} &=  	\left(\sum_{j=1}^{n}\left(\sum_{k=1}^{m}\left| v_{k}^{\frac{1}{s}}x^{*}_{k}(x_{j})\right| ^{s} \right)^{\frac{q}{s}}  \right)^{\frac{1}{q}}\\
		&\leq \|(x_{j})_{j=1}^{n}\|_{A(s,q,s)} \cdot \left\| \left( v_{k}^{\frac{1}{s}}x_{k}^{*}\right) _{k=1}^{m}\right\|_{s}\\
		&\leq \|(x_{j})_{j=1}^{n}\|_{A(s,q,s)}.
	\end{align*}
	So, considering $W(B_{X^{*}})$ the set of all regular probability measures defined in the $\sigma$-algebra of the subsets of $B_{X^{*}}$, we can say that
	\begin{equation}\label{A5}
		\left( \sum_{j=1}^{n}\left(\int_{B_{X^*}} \left|   x^*(x_{j})  \right|^{s} d\mu(x^{*})   \right)^{\frac{q}{s}}  \right)^{\frac{1}{q}} \leq  \|(x_{j})_{j=1}^{\infty}\|_{A(s,q,s)} < \infty,
	\end{equation}
	for every discrete probability measure $\mu \in W(B_{X^{*}})$. As these probability measures are dense in $W(B_{X^{*}})$ with respect to the weak topology defined by $C(B_{X^{*}})$, we can conclude that \eqref{A5} is valid for all $\mu \in W(B_{X^{*}})$. Therefore, from Theorem \ref{Maurey}, we have that $\ell^{A}_{(s,q,s)}(X)\subseteq\ell_{\mathfrak{m}(s;q)}(X) $
	and	\begin{equation*}
		\|(x_{j})_{j=1}^{\infty}\|_{m(s;q)} \leq  \|(x_{i})_{j=1}^{\infty}\|_{A(s,q,s)}.
	\end{equation*}
\end{proof}

\begin{remark}\rm (a) The Theorem \ref{R60} is also valid for $q=s$. In fact, in \cite{Apie} Pietsch proves that  $\ell_{m(s;s)}(X)=\ell_{s}^{w}(X)$ and Remark \ref{R10} asserts that $\ell_{s}^{w}(X)= \ell^{A}_{s,s,s}(X)$, with the equality of their respective norms in both cases.\\
(b) Excluding the so-called trivial case (the equality in (a)), Theorem \ref{R60} in conjunction with Theorem \ref{R24} gives us a characterization for the equality between $\ell^{w}_{q}(X)$ and $\ell_{m(q;s)}(X)$, which as far as we know is not present in the literature.\\ 
(c) From Theorems \ref{R60} and \ref{TDR} we obtain a Dvoretzky-Rogers-type theorem for the space of mixed $(s;q)$-summable sequences. However, this result has already been proven in \cite[Theorem 2.1]{M.C.}.
\end{remark}

To end the section, we prove that the correspondence $X \mapsto \ell^{A}_{(s,q,r)}(X)$, denoted by $\ell^{A}_{(s,q,r)}(\cdot)$, is a sequence class. 

\begin{definition}\rm  \cite[Definition 2.1]{G.R} 
	A class of vector-valued sequences $X$, or simply a sequence class $X$, is a rule that assigns to each Banach space $E$ a Banach space $X(E)$ of $E$-valued sequences, that is $X(E)$ is a vector subspace of $E^{\mathbb{N}}$ with the coordinatewise operations, such that:
	$$c_{00}(E) \subseteq X(E) \stackrel{\textrm{1}}{\hookrightarrow} \ell^{\infty}(E)~\text{and}~ \|e_{j}\|_{X(\mathbb{K})} = 1~\text{for every}~ j.$$
\end{definition}

The framework of sequence classes is an abstract theory build in  \cite{G.R} to deal with ideals of linear and multilinear operators characterized by the transformation of
vector-valued sequences. In addition to the construction, results and examples present in paper \cite{G.R} and in the  references \cite{AAR, BotCam2, G.R 2, BCJ, BF, PG, RS} bring us several applications of the sequence classes environment that show its versatility for operator theory. We will (of course) take advantage of this environment in the next section.

For the reader unfamiliar with the sequence classes environment, the definitions used in the following proposition can be found in \cite[Definitions 2.1 and 3.2]{G.R}.

\begin{proposition}\label{SeqClass}
	The correspondence $X \mapsto \ell^{A}_{(s,q,r)}(X)$ is a linearly stable and finitely determined sequence class.
\end{proposition}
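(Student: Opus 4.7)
The plan is to verify, in sequence, the three axioms for being a sequence class, then linear stability, and finally the finitely determined property.

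For the sequence class axioms, the inclusions $c_{00}(X) \subseteq \ell^{A}_{(s,q,r)}(X) \stackrel{1}{\hookrightarrow} \ell_\infty(X)$ have already been recorded in Remark \ref{7}, so it only remains to compute $\|e_j\|_{A(s,q,r)}=1$ in $\ell^{A}_{(s,q,r)}(\mathbb{K})$. Identifying $(\mathbb{K})^{*}$ with $\mathbb{K}$, for $(a_k)_{k=1}^{\infty}\in B_{\ell_r}$ one has
\begin{equation*}
\left(\sum_{l=1}^{\infty}\Big(\sum_{k=1}^{\infty}|a_k\,\delta_{lj}|^{s}\Big)^{q/s}\right)^{1/q}=\left(\sum_{k=1}^{\infty}|a_k|^{s}\right)^{1/s}\le \|(a_k)_{k=1}^{\infty}\|_{r}\le 1,
\end{equation*}
with equality attained at $a_1=1$, since $r\le s$; hence $\|e_j\|_{A(s,q,r)}=1$.

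For linear stability, the key observation is that the adjoint $u^{*}\colon Y^{*}\to X^{*}$ satisfies $(u^{*}y_k^{*})_{k=1}^{\infty}\in \ell_r(X^{*})$ with norm at most $\|u\|\,\|(y_k^{*})_{k=1}^{\infty}\|_{r}$, whenever $(y_k^{*})_{k=1}^{\infty}\in \ell_r(Y^{*})$. So, for $(x_j)_{j=1}^{\infty}\in \ell^{A}_{(s,q,r)}(X)$, the identity $y_k^{*}(u(x_j))=(u^{*}y_k^{*})(x_j)$ yields
\begin{equation*}
\left(\sum_{j=1}^{\infty}\Big(\sum_{k=1}^{\infty}|y_k^{*}(u(x_j))|^{s}\Big)^{q/s}\right)^{1/q}\le \big\|(x_j)_{j=1}^{\infty}\big\|_{A(s,q,r)}\,\|u\|\,\big\|(y_k^{*})_{k=1}^{\infty}\big\|_{r},
\end{equation*}
and taking the supremum over $(y_k^{*})_{k=1}^{\infty}\in B_{\ell_r(Y^{*})}$ gives $\widehat{u}\in \mathcal{L}(\ell^{A}_{(s,q,r)}(X);\ell^{A}_{(s,q,r)}(Y))$ with $\|\widehat{u}\|\le \|u\|$. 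The reverse estimate $\|u\|\le \|\widehat{u}\|$ follows from the isometric embedding $x\mapsto x\cdot e_1$ noted in Remark \ref{7}, applied at an arbitrary $x\in B_X$.

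For the finitely determined condition, one direction is trivial: truncation cannot increase the norm, so $\sup_n\|(x_j)_{j=1}^{n}\|_{A(s,q,r)}\le \|(x_j)_{j=1}^{\infty}\|_{A(s,q,r)}$. Conversely, fix $(x_k^{*})_{k=1}^{\infty}\in B_{\ell_r(X^{*})}$ and apply the Monotone Convergence Theorem twice (once on $k$, once on $j$) to obtain
\begin{equation*}
\left(\sum_{j=1}^{\infty}\Big(\sum_{k=1}^{\infty}|x_k^{*}(x_j)|^{s}\Big)^{q/s}\right)^{1/q}=\lim_{n\to \infty}\left(\sum_{j=1}^{n}\Big(\sum_{k=1}^{\infty}|x_k^{*}(x_j)|^{s}\Big)^{q/s}\right)^{1/q}\le \sup_n\|(x_j)_{j=1}^{n}\|_{A(s,q,r)},
\end{equation*}
and taking the supremum over $(x_k^{*})_{k=1}^{\infty}\in B_{\ell_r(X^{*})}$ yields the other inequality; in particular $(x_j)_{j=1}^{\infty}\in \ell^{A}_{(s,q,r)}(X)$ whenever the right-hand side is finite. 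I expect the only mildly subtle point to be the justification of $\|u\|\le \|\widehat{u}\|$ in linear stability, which must appeal to the isometric copy of $X$ inside $\ell^{A}_{(s,q,r)}(X)$ rather than being a direct computation from the definition.
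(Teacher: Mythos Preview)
Your proof is correct and follows essentially the same approach as the paper's: the same use of Remark \ref{7} for the inclusions, the same adjoint/composition trick $y_k^{*}\circ u\in\ell_r(X^{*})$ for linear stability, and the same supremum-exchange argument for the finitely determined property. The only cosmetic differences are that the paper obtains $\|e_j\|_{A(s,q,r)}=1$ via the sandwich $\|\cdot\|_{w,q}\le\|\cdot\|_{A(s,q,r)}\le\|\cdot\|_q$ from Proposition \ref{2} rather than by direct computation, and it proves $\|u\|\le\|\widehat{u}\|$ by an explicit Hahn--Banach calculation with $x\cdot e_1$ and $x^{*}\cdot e_1$ (which is precisely the content of the isometric embedding you invoke from Remark \ref{7}).
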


\begin{proof}
	Thanks to Remarks \ref{R10} and \ref{7}, to prove that $\ell^{A}_{(s,q,r)}(\cdot)$ is a sequence class it is enough to show that $\|e_{j}\|_{A(s,q,r)} = 1$ for any $j \in \mathbb{N}$. As $e_{j} \in \ell_{(s,q,r)}^{\text{A}}(\mathbb{K})$, by the first inclusion in \eqref{incseq}, and $\|e_{j}\|_{q} = \|e_{j}\|_{w,q} = 1$, the Proposition \ref{2} ensures that $\|e_{j}\|_{A(s,q,r)} = 1$.
	
	Let us prove that $\ell^{A}_{(s,q,r)}(\cdot)$ is finitely determined. As 
	\begin{align*}
		\sup_{n \in \mathbb{N}}\|(x_{j})_{j=1}^{n}\|_{A(s,q,r)} &= \sup_{n \in \mathbb{N}}\sup_{(x^{*}_{k})_{k=1}^{\infty} \in B_{\ell_{r}(X^{*})}}\left(  \sum_{j=1}^{n}\left(\sum_{k=1}^{\infty}|x_{k}^{*}(x_{j})|^{s} \right)^{\frac{q}{s}}\right)^{\frac{1}{q}}\\
		& = \sup_{(x^{*}_{k})_{k=1}^{\infty} \in B_{\ell_{r}(X^{*})}} \sup_{n \in \mathbb{N}} \left(  \sum_{j=1}^{n}\left(\sum_{k=1}^{\infty}|x_{k}^{*}(x_{j})|^{s} \right)^{\frac{q}{s}}\right)^{\frac{1}{q}}\\
		& = \|(x_{j})_{j=1}^{\infty}\|_{A(s,q,r)},
	\end{align*}
we obtain, for $(x_{j})_{j=1}^{\infty} \in X^\mathbb{N}$, that $(x_{j})_{j=1}^{\infty} \in \ell^{A}_{(s,q,r)}(X)$ if and only if $\displaystyle\sup_{n \in \mathbb{N}} \|(x_{j})_{j=1}^{n}\|_{A(s,q,r)} < \infty$ and, in this case,
$$\|(x_{j})_{j=1}^{\infty}\|_{A(s,q,r)} = \sup_{n \in \mathbb{N}} \|(x_{j})_{j=1}^{n}\|_{A(s,q,r)}.$$
This is exactly what it means for $\ell^{A}_{(s,q,r)}(\cdot)$ to be finitely determined.

To prove that $\ell^{A}_{(s,q,r)}(\cdot)$ is linearly stable we need to show that if $(x_{j})_{j=1}^{\infty} \in \ell^{A}_{(s,q,r)}(U)$ and $u \in \mathcal{L}(U;X)$, then $(u(x_{j}))_{j=1}^{\infty} \in \ell^{A}_{(s,q,r)}(X)$ and that, in this case, the (well-defined) induced operator
$\widehat{u}: \ell^{A}_{(s,q,r)}(U) \rightarrow \ell^{A}_{(s,q,r)}(X)$
is continuous with $\|\widehat{u}\| = \|u\|$.

As $\ell_{r}(\cdot)$ is linearly stable (see \cite[Example 3.3]{G.R}), if  $(x^{*}_{k})_{k=1}^{\infty} \in \ell_{r}(X^{*})$ we have  $(x_{k}^{*} \circ u)_{k=1}^{\infty} \in \ell_{r}(U^{*})$. So, if $(x_{j})_{j=1}^{\infty} \in \ell^{A}_{(s,q,r)}(U)$ it follows that
$$\sum_{j=1}^{\infty}\left(\sum_{k=1}^{\infty}|(x_{k}^{*} \circ u)(x_{j})|^{s} \right)^{\frac{q}{s}} < \infty$$
and $(u(x_{j}))_{j=1}^{\infty} \in \ell^{A}_{(s,q,r)}(X)$, proving that the induced operator $\widehat{u}$ it is well defined.

Now, if $(x^{*}_{k})_{k=1}^{\infty} \in B_{\ell_{r}(X^{*})}$ a simple calculations gives us $\left(\frac{x_{k}^{*} \circ u}{\|u\|}\right) _{k=1}^{\infty} \in B_{\ell_{r}(U^{*})}$ and therefore 
\begin{align*}
	\|\widehat{u}\|	&= \displaystyle\sup_{(x_{j})_{j=1}^{\infty}\in B_{\ell^{A}_{(s,q,r)}(U)}}\|
	(u(x_{j}))_{j=1}^{\infty}\|_{A(s,q,r)}\\
	&=\displaystyle\sup_{(x_{j})_{j=1}^{\infty}\in B_{\ell^{A}_{(s,q,r)}(U)}} \left( \sup_{(x^{*}_{k})_{k=1}^{\infty} \in B_{\ell_{r}(X^{*})}} \left(\sum_{j=1}^{\infty}\left(\sum_{k=1}^{\infty}|x^{*}_{k}(u(x_{j}))|^{s} \right)^{q/s}  \right)^{1/q}  \right) \\
	&=\|u\| \displaystyle\sup_{(x_{j})_{j=1}^{\infty}\in B_{\ell^{A}_{(s,q,r)}(U)}} \left( \sup_{(x^{*}_{k})_{k=1}^{\infty} \in B_{\ell_{r}(X^{*})}} \left(\sum_{j=1}^{\infty}\left(\sum_{k=1}^{\infty}\left| \left( \frac{x^{*}_{k} \circ u}{\|u\|}\right)(x_{j}) \right| ^{s} \right)^{q/s}  \right)^{1/q}  \right) \\
	&\leq \|u\| \displaystyle\sup_{(x_{j})_{j=1}^{\infty}\in B_{\ell^{A}_{(s,q,r)}(U)}}\|
	(x_{j})_{j=1}^{\infty}\|_{A(s,q,r)}\leq  \|u\|.
\end{align*}
	On the other hand, if $(x_{j})_{j=1}^{\infty} = x \cdot e_1$, where $x \in B_U$, we know that $\|(x_{j})_{j=1}^{\infty}\|_{A(s,q,r)} \leq 1$. So, for any $x^* \in B_{X^{*}}$, we have $(x_{k}^{*})_{k=1}^{\infty}=x^* \cdot e_1 \in B_{\ell_{r}(X^{*})}$ and  
	\begin{align*}
		|x^*(u(x))| & =  \left( \sum_{k=1}^{\infty }|x^*_{k}(u(x))|^{s}\right) ^{{1}/{s}} = \left( \sum_{j=1}^{\infty
	}\left( \sum_{k=1}^{\infty }|x^*_{k}(u(x_{j}))|^{s}\right) ^{{q}/{
			s}}\right) ^{1/{q}} \\
		& \leq \|(u(x_{j}))_{j=1}^{\infty}\|_{A(s,q,r)}, 
	\end{align*}
	for all $x \in B_{U}$. By the Hahn-Banach Theorem, it follows that
	$$\|u(x)\|= \sup_{x^* \in B_{X^{*}}}|x^*(u(x))| \leq \|(u(x_{j}))_{j=1}^{\infty}\|_{A(s,q,r)} = \|\widehat{u}((x_{j})_{j=1}^{\infty})\|_{A(s,q,r)} \leq \|\widehat{u}\|$$
	for all $x \in B_{U}$ and so  $\|u\| \leq  \|\widehat{u}\|$.	
\end{proof}

To end this session let us comment the impact of Theorem \ref{R60} to our study. 

As we mentioned in Section 1, the norm of the space $\ell_{m(s;q)}(X)$ is originally defined by an infimum and this makes it unfeasible to work in the sequence class environment. For instance, we don't know if $\ell_{m(s;q)}(\cdot)$ is finitely determined. Even the word ``summable'' in the definition of mixed $(s;q)$-summable sequences does not make sense since the expression of $\|\cdot\|_{m(s;q)}$ does not involve a series.

The result of Theorem \ref{R60} actually introduces an alternative norm in the space $\ell_{m(s;q)}(X)$ which takes us to the sequence class environment (through Theorem \ref{SeqClass}) and its entire set of useful tools for the study of summing operators.

\section{Anisotropic summing operators}

We will study some classes of operators that are characterized by transformations of vector-valued sequences and deal with anisotropic summable sequences. In what follows, $1 \leq s,r,q,p < \infty$ are real numbers, $X$ and $U$ are Banach spaces and $T \in \mathcal{L}(U;X)$.

\begin{definition}\label{defweak}\rm 
	Let $1 \le p \leq q < s$ and $1\leq r\le s$ be real numbers. We say that $T$ is \textit{weakly anisotropic $(s,q,r;p)$-summing} if
	\begin{equation*}
	\left(T(u_{j})\right)_{j=1}^{\infty} \in \ell_{(s,q,r)}^{\text{A}}(X)\ \text{ whenever }\ (u_j)_{j=1}^\infty \in \ell_{p}^{w}(U),
	\end{equation*}
	that is, the induced operator $\widehat{T}: \ell_{p}^{w}(U) \rightarrow \ell_{(s,q,r)}^{\text{A}}(X)$ it is well defined.
\end{definition}
\begin{definition}\label{def2}\rm
	Let $1 \le q < s$, $q \leq p$ and  $1\leq r\le s$ be real numbers. We say that $T$ is \textit{anisotropic $(p;s,q,r)$-summing} if
	\begin{equation*}
	\left(T(u_{j})\right)_{j=1}^{\infty} \in \ell_{p}(X) \text{ whenever } (u_j)_{j=1}^\infty \in \ell_{(s,q,r)}^{\text{A}}(U)
	\end{equation*}
	and that means the induced operator $\widehat{T}: \ell_{(s,q,r)}^{\text{A}}(U) \rightarrow \ell_{p}^{}(X)$ it is well defined.
\end{definition}

The symbols $\mathcal{W}^{A}_{(s,q,r;p)}(U ; X )$ and  $\Pi^{A}_{(p;s,q,r)}(U;X)$ denote the classes of all weakly aniso- tropic $(s,q,r;p)$-summing operators and all anisotropic $(p;s,q,r)$-summing operators, from $U$ into $X$, respectively.

\begin{remark}\rm 
	(a) Straightforward calculations show that if $q<p$ then $\mathcal{W}^{A}_{(s,q,r;p)}(U;X)= \{0\}$ and, from Remark \ref{R10}, this also occurs if $s < r$. The same remark ensures that $\mathcal{W}^{A}_{(s,q,r;p)}(U;X) = \mathcal{L}(U;X)$ if $s \le q$. \\
	(b) Again, a simple calculus shows that $\Pi^{A}_{(p;s,q,r)}(U;X) = \{0\}$ if $p<q$ and, for the same reasons as in (a), if $s \leq q$ then $\Pi^{A}_{(p;s,q,r)}(U;X) = \Pi_{p,q}(U;X)$ and $\Pi^{A}_{(p;s,q,r)}(U;X) = \mathcal{L}(U;X)$ if $s<r$.  
\end{remark}

Using Proposition \ref{SeqClass} and the abstract approach of \cite{G.R}, the following two propositions are immediate consequences of \cite[Proposition 2.4]{G.R}.

\begin{proposition} \label{pq}
The following statements are equivalent:
	\begin{description}
		\item[(i)]  $T \in \mathcal{W}^{A}_{(s,q,r;p)}(U;X )$.		
		\item[(ii)] There is a constant $D>0$ such that
		$
		\|\left(  T(u_{j})\right)  _{j=1}^{\infty}\|_{A(s,q,r)} \leq D \|(u_j)_{j=1}^\infty\|_{w,p},
		$
		for every $(u_j)_{j=1}^\infty \in \ell_{p}^{w}(U)$;		
		\item[(iii)] There is a constant $D>0$ such that
		$
		\|\left(  T(u_{j})\right)  _{j=1}^m\|_{A(s,q,r)} \leq D\|(u_j)_{j=1}^m\|_{w,p},
		$
		for every $m \in \mathbb{N}$ and all $u_{1},\ldots,u_{m}$ in $U$.		
		\item[(iv)] There is a constant $D>0$ such that
		\begin{equation*} 
		\left( \sum_{j=1}^\infty \left( \sum_{k=1}^\infty \left| x^*_k\left( T(u_j)\right) \right|^s \right)^{\frac{q}{s}} \right)^{\frac{1}{q}} \leq D \left\| (x^*_k)_{k=1}^\infty \right\|_r \left\| (u_j)_{j=1}^\infty  \right\|_{w,p} ,
		\end{equation*}
		for every $(u_j)_{j=1}^\infty \in \ell_{p}^{w}(U)$ and all $(x^*_k)_{k=1}^\infty \in \ell_r(X^*)$.		
		\item[(v)] There is a constant $D>0$ such that
		\begin{equation*} 
		\left( \sum_{j=1}^m \left( \sum_{k=1}^\infty \left| x^*_k\left( T(u_j)\right) \right|^s \right)^{\frac{q}{s}} \right)^{\frac{1}{q}} \leq D \left\| (x^*_k)_{k=1}^\infty \right\|_r \left\| (u_j)_{j=1}^m  \right\|_{w,p} ,
		\end{equation*}
		for every $m \in \mathbb{N}$, $u_1, \ldots, u_m \in U$ and  all $(x^*_k)_{k=1}^\infty \in \ell_r(X^*)$.		
		\item[(vi)] There is a constant $D>0$ such that
		\begin{equation} \label{mix6}
		\left( \sum_{j=1}^m \left( \sum_{k=1}^n \left| x^*_k\left( T(u_j)\right) \right|^s \right)^{\frac{q}{s}} \right)^{\frac{1}{q}} \leq D \left\| (x^*_k)_{k=1}^n \right\|_r \left\| (u_j)_{j=1}^m  \right\|_{w,p} ,
		\end{equation}
		for every $m,n \in \mathbb{N}$, $u_1, \ldots, u_m \in U$ and $x^*_1, \ldots, x^*_n \in X^*.$
	\end{description}
Furthermore, the infimum of all the constants satisfying \eqref{mix6} (or any other inequality above) defines a norm for  the class $\mathcal{W}^{ A} _{(s,q,r;p)}(U;X)$, which is denoted by $w^{A}_{(s,q,r;p)}(\cdot)$.	
\end{proposition}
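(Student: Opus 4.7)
The plan is to reduce everything to the abstract machinery of sequence classes developed in \cite{G.R}. By Proposition \ref{SeqClass} the correspondence $\ell^{A}_{(s,q,r)}(\cdot)$ is a linearly stable and finitely determined sequence class, and (see \cite[Example 3.3]{G.R}) the same holds for $\ell_{p}^{w}(\cdot)$. Thus the equivalences (i)--(iii), as well as the assertion that the infimum of the admissible constants is a complete norm on $\mathcal{W}^{A}_{(s,q,r;p)}(U;X)$, follow at once by applying \cite[Proposition 2.4]{G.R} to this pair of sequence classes. This takes care of the abstract part of the statement.

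For the equivalences (iv)--(vi) I would merely unpack the definition of $\|\cdot\|_{A(s,q,r)}$. The equivalence (ii) $\Leftrightarrow$ (iv) is immediate: since $\|(T(u_j))_{j=1}^{\infty}\|_{A(s,q,r)}$ is defined as the supremum over $(x_{k}^{*})_{k=1}^{\infty} \in B_{\ell_{r}(X^{*})}$, homogeneity in $(x_{k}^{*})$ shows that the estimate in (ii) is equivalent to
\[\left(\sum_{j=1}^{\infty}\Bigl(\sum_{k=1}^{\infty}|x_{k}^{*}(T(u_{j}))|^{s}\Bigr)^{q/s}\right)^{1/q} \leq D\,\|(x_{k}^{*})_{k=1}^{\infty}\|_{r}\,\|(u_{j})_{j=1}^{\infty}\|_{w,p}\]
for all $(u_{j})_{j=1}^{\infty} \in \ell_{p}^{w}(U)$ and $(x_{k}^{*})_{k=1}^{\infty} \in \ell_{r}(X^{*})$. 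The very same unpacking applied to finite blocks $(u_{j})_{j=1}^{m}$ yields (iii) $\Leftrightarrow$ (v).

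Finally, (v) $\Leftrightarrow$ (vi) is a routine truncation argument. The direction (v) $\Rightarrow$ (vi) is obtained by extending any finite family $x_{1}^{*},\dots,x_{n}^{*}$ by zero beyond index $n$, which produces an element of $\ell_{r}(X^{*})$ whose $\ell_{r}$-norm equals $\|(x_{k}^{*})_{k=1}^{n}\|_{r}$ and for which the inner $s$-series collapses to the finite sum appearing in \eqref{mix6}. Conversely, fixing an arbitrary $(x_{k}^{*})_{k=1}^{\infty} \in \ell_{r}(X^{*})$, applying (vi) to its $n$-truncations, and letting $n \to \infty$ recovers (v) by monotone convergence applied to the nonnegative quantities $|x_{k}^{*}(T(u_{j}))|^{s}$ in the inner series (and then to the outer finite $q$-sum over $j$).

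I do not expect any real obstacle; the proposition is essentially a translation between the abstract sequence-class vocabulary and concrete scalar inequalities, once Proposition \ref{SeqClass} and \cite[Proposition 2.4]{G.R} are in place. The only step that deserves a moment of care is the passage (vi) $\Rightarrow$ (v), where the exchange of the limit in the truncation parameter $n$ with the outer $q$-th root and the $j$-sum must be justified; this is however standard monotone convergence, since all terms involved are nonnegative and the outer sum over $j$ is already finite.
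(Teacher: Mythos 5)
Your proposal is correct and follows essentially the same route as the paper, which simply observes that (i)--(iii) and the norm statement are immediate from Proposition \ref{SeqClass} together with \cite[Proposition 2.4]{G.R}, the remaining items being routine unpackings of the definition of $\|\cdot\|_{A(s,q,r)}$. The extra details you supply for (iv)--(vi) (homogeneity in $(x_k^*)_{k=1}^\infty$, truncation, and monotone convergence) are exactly the ones the paper leaves implicit, and they are all sound.
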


\begin{proposition}\label{R12}
	The following statements are equivalent:
	\begin{description}
		\item[(i)] $T \in \Pi^{A}_{(p;s,q,r)}(U;X)$.		
		\item[(ii)] There is a constant $D>0$ such that
		$
		\|(T(u_{j}))_{j=1}^{\infty}\|_{p} \leq D \cdot \|(u_{j})_{j=1}^{\infty}\|_{A(s,q,r)},
		$
		whenever $(u_{j})_{j=1}^{\infty} \in \ell^{A}_{(s,q,r)}(U)$.		
		\item[(iii)] There is a constant $D>0$, such that
		$
		\|(T(u_{j}))_{j=1}^{m}\|_{p} \leq D \cdot \|(u_{j})_{j=1}^{m}\|_{A(s,q,r)},
		$ 
		for all $m \in \mathbb{N}$ and $u_{1},\ldots,u_{m} \in U$.		
	\end{description}
Furthermore, the infimum of all the constants satisfying any inequality above defines a norm for  the class $\Pi^{A} _{(p;s,q,r)}(U;X)$, which is denoted by $\pi^{A}_{(p;s,q,r)}(\cdot)$.	
\end{proposition}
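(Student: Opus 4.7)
The plan is to invoke the abstract machinery from \cite{G.R} through Proposition \ref{SeqClass}, which already shows that $\ell^{A}_{(s,q,r)}(\cdot)$ is a linearly stable, finitely determined sequence class. Since $\ell_{p}(\cdot)$ is a sequence class enjoying the same two properties, this puts us exactly in the situation covered by \cite[Proposition 2.4]{G.R}, so the three characterizations should follow with only minor translation to the current notation; I would nevertheless sketch the three implications explicitly, for the reader's convenience.

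First I would address the equivalence (i) $\Leftrightarrow$ (ii). The nontrivial direction is (i) $\Rightarrow$ (ii), which I would establish via the closed graph theorem applied to $\widehat{T}:\ell^{A}_{(s,q,r)}(U)\to\ell_{p}(X)$. Both $\ell^{A}_{(s,q,r)}(U)$ and $\ell_{p}(X)$ continuously embed into $\ell_{\infty}(U)$ and $\ell_{\infty}(X)$ respectively (the former by Remark \ref{7}), so norm convergence in each space implies coordinate-wise convergence; combined with the continuity of $T$, this closes the graph of $\widehat{T}$. The reverse implication (ii) $\Rightarrow$ (i) is immediate from Definition \ref{def2}.

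Next, the equivalence (ii) $\Leftrightarrow$ (iii) rests on the finitely determined property established in Proposition \ref{SeqClass}. Restriction to the initial finite segment gives (ii) $\Rightarrow$ (iii) at once. For the converse, given $(u_{j})_{j=1}^{\infty}\in\ell^{A}_{(s,q,r)}(U)$, the finite inequality in (iii) combined with the monotonicity $\|(u_{j})_{j=1}^{m}\|_{A(s,q,r)}\le\|(u_{j})_{j=1}^{\infty}\|_{A(s,q,r)}$ (which is exactly the finitely-determined identity proved in Proposition \ref{SeqClass}) gives $\sup_{m}\|(T(u_{j}))_{j=1}^{m}\|_{p}<\infty$; since $\ell_{p}(\cdot)$ is also finitely determined, this yields $(T(u_{j}))_{j=1}^{\infty}\in\ell_{p}(X)$ together with the inequality in (ii), just by letting $m\to\infty$.

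Finally, the norm claim is a routine verification: writing $\pi^{A}_{(p;s,q,r)}(T)$ for the infimum of admissible constants, positive homogeneity and subadditivity follow from the corresponding properties of $\|\cdot\|_{p}$ and the inequality in (iii), while non-degeneracy comes from taking $m=1$ and a unit vector $u_{1}=u\in U$, in which case the right-hand side of (iii) is a constant multiple of $\|u\|$ and the left-hand side equals $\|T(u)\|$. I expect the whole argument to be essentially mechanical; the only step with any subtlety is the closed graph application, where one must be careful to verify that both ambient spaces control coordinates via their embedding into $\ell_{\infty}$, but this is handled precisely by Remark \ref{7}.
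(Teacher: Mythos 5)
Your proposal is correct and follows the same route as the paper, which derives the proposition directly from \cite[Proposition 2.4]{G.R} via the fact (Proposition \ref{SeqClass}) that $\ell^{A}_{(s,q,r)}(\cdot)$ and $\ell_{p}(\cdot)$ are linearly stable and finitely determined sequence classes; your explicit closed-graph and truncation arguments are exactly the content of that abstract result.
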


For an operator $T \in \mathcal{L}(U;X^{*})$, where the target space is a dual space, we also have the following characterization. 

\begin{corollary}
	An operator $T \in \mathcal{W}^{A}_{(s,q,r;p)}(U;X^{*})$ if and only if there is a constant $D>0$ such that 
	\begin{equation}\label{R26}
	\left( \sum_{j=1}^\infty \left( \sum_{k=1}^\infty \left|  T(u_j)(x_{k}) \right|^s \right)^{\frac{q}{s}} \right)^{\frac{1}{q}} \leq D \left\| (x_k)_{k=1}^\infty \right\|_r \left\| (u_j)_{j=1}^\infty  \right\|_{w,p} ,
	\end{equation}
	for all  $(u_{j})_{j=1}^{\infty} \in \ell_{p}^{w}(U)$ and  $(x_{k})_{k=1}^{\infty} \in \ell_{r}(X)$, with $w^{A}_{(s,q,r;p)}(T) = \inf\{D: (\ref{R26}) ~\text{holds}\}$. More over, the sentences (v) and (vi) in Proposition \ref{pq} also hold in this case with the necessary adaptations.
\end{corollary}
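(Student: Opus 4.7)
The plan is to deduce the corollary from Proposition \ref{pq} applied to $T \in \mathcal{L}(U;X^{*})$, by showing that the test sequences in $\ell_r(X^{**})$ used there may be replaced by test sequences in $\ell_r(X)$ via the canonical isometric embedding $J_X\colon X \hookrightarrow X^{**}$, which satisfies $J_X(x)(T(u)) = T(u)(x)$.

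For the easy direction, if $T \in \mathcal{W}^{A}_{(s,q,r;p)}(U;X^{*})$, Proposition \ref{pq}(v) gives its inequality for every $(y^{**}_k)_{k=1}^{\infty} \in \ell_r(X^{**})$ with constant $D = w^{A}_{(s,q,r;p)}(T)$. Restricting to $y^{**}_k = J_X(x_k)$ for $x_k \in X$ yields (\ref{R26}) with the same $D$, since $\|(J_X(x_k))_{k=1}^{\infty}\|_r = \|(x_k)_{k=1}^{\infty}\|_r$. Hence $\inf\{D : (\ref{R26})\text{ holds}\} \leq w^{A}_{(s,q,r;p)}(T)$.

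For the converse, suppose (\ref{R26}) holds with constant $D$; I verify the analogue of Proposition \ref{pq}(vi) for $T$ with the same constant. Fix $m, n \in \mathbb{N}$, $u_1, \ldots, u_m \in U$ and $y^{**}_1, \ldots, y^{**}_n \in X^{**}$. For each $\epsilon > 0$ and each $k$, Goldstine's theorem applied inside the ball $\|y^{**}_k\| B_{X^{**}}$ produces $x_k \in X$ with $\|x_k\| \leq \|y^{**}_k\|$ and $|y^{**}_k(T(u_j)) - T(u_j)(x_k)| < \epsilon$ for every $j = 1, \ldots, m$. Then $\|(x_k)_{k=1}^{n}\|_r \leq \|(y^{**}_k)_{k=1}^{n}\|_r$. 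Applying (\ref{R26}) to $(x_k)_{k=1}^{n}$ and controlling the $m \times n$ matrix of errors via the triangle inequality for the $\ell_q^m(\ell_s^n)$-norm --- whose contribution is bounded by a constant depending only on $m$ and $n$ times $\epsilon$ --- and letting $\epsilon \to 0$, I obtain
\[
\left(\sum_{j=1}^m \left(\sum_{k=1}^n |y^{**}_k(T(u_j))|^s\right)^{q/s}\right)^{1/q} \leq D \|(y^{**}_k)_{k=1}^n\|_r \|(u_j)_{j=1}^m\|_{w,p}.
\]
By Proposition \ref{pq}(vi)$\Leftrightarrow$(i), this gives $T \in \mathcal{W}^{A}_{(s,q,r;p)}(U;X^{*})$ with $w^{A}_{(s,q,r;p)}(T) \leq D$, completing the norm identity. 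The dual-target analogues of (v) and (vi) follow by the same substitution $y^{**}_k \leftrightarrow J_X(x_k)$.

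The only real obstacle is this converse, namely the transfer of a bound stated for $\ell_r(X)$ test sequences to all of $\ell_r(X^{**})$. Goldstine's theorem is the right tool because it acts pointwise in $k$, which matches the coordinatewise definition of $\|\cdot\|_r$, and the resulting approximation error is harmless once one passes to the limit $\epsilon \to 0$. The principle of local reflexivity would yield an exact (non-approximate) transfer but is heavier than required here.
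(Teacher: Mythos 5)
Your proof is correct, and the forward direction (composing with the canonical embedding $J_X$ and using that it is an isometry, hence preserves the $\ell_r$-norm of the test sequence) is exactly the paper's argument. For the converse, however, you take a genuinely different route: the paper invokes the Principle of Local Reflexivity to produce, for each $\epsilon>0$, an operator $R$ on $\mathrm{span}\{y_1^{**},\dots,y_n^{**}\}$ with $\|R\|\le 1+\epsilon$ satisfying $T(u_j)(R(y_k^{**}))=y_k^{**}(T(u_j))$ \emph{exactly}, so the error appears only as a $(1+\epsilon)$ factor on $\|(y_k^{**})_{k=1}^n\|_r$; you instead use Goldstine's theorem coordinatewise to find $x_k\in X$ with $\|x_k\|\le\|y_k^{**}\|$ whose values against the finitely many functionals $T(u_1),\dots,T(u_m)\in X^*$ approximate those of $y_k^{**}$ to within $\epsilon$, so the test-sequence norm is controlled exactly and the error appears additively, bounded by $m^{1/q}n^{1/s}\epsilon$ via the triangle inequality in $\ell_q^m(\ell_s^n)$. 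Both transfers are sound and both require $\epsilon\to 0$; yours is more elementary (Goldstine is weaker machinery than PLR, as you note) and, by landing directly in the finite-sequence characterization (vi) of Proposition \ref{pq}, it also sidesteps the paper's final appeal to the finite determinacy of $\ell_r(\cdot)$ and $\ell_p^w(\cdot)$. The paper's PLR argument has the mild advantage of handling all $j$ and $k$ simultaneously through a single linear map, but for this scalar-matrix estimate that extra structure is not needed.
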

\begin{proof}
	Let $T \in \mathcal{W}^{A}_{(s,q,r;p)}(U;X^{*})$,  $(u_{j})_{j=1}^{\infty} \in \ell_{p}^{w}(U)$ and  $(x_{k})_{k=1}^{\infty} \in \ell_{r}(X)$. Considering the canonical map $J:X\rightarrow X^{**}$, as $\ell_{r}(\cdot)$ is linearly stable (see \cite[Example 3.3]{G.R}), we have $(J(x_{k}))_{k=1}^{\infty} \in \ell_r(X^{**})$. So, the Proposition \ref{pq} ensures that there is a constant $D>0$ such that
	\begin{align*}
	\left( \sum_{j=1}^\infty \left( \sum_{k=1}^\infty \left|  T(u_j)(x_{k}) \right|^s \right)^{\frac{q}{s}} \right)^{\frac{1}{q}}  &=  \left( \sum_{j=1}^\infty \left( \sum_{k=1}^\infty \left|  J(x_{k})(T(u_{j})) \right|^s \right)^{\frac{q}{s}} \right)^{\frac{1}{q}}\\
	&\leq D \left\| (J(x_k))_{k=1}^\infty \right\|_r \left\| (u_j)_{j=1}^\infty  \right\|_{w,p}\\
	&= D \left\| (x_k)_{k=1}^\infty \right\|_r \left\| (u_j)_{j=1}^\infty  \right\|_{w,p}.
	\end{align*}
	 Reciprocally, let $(u_{j})_{j=1}^{\infty} \in \ell_{p}^{w}(U)$ and $(x^{**}_{k})_{k=1}^{\infty} \in \ell_{r}(X^{**})$. For each $n,m \in \mathbb{N}$, consider the sets $M = \text{span}\{x_{1}^{**},\ldots,x_{n}^{**}\}$ and $N = \text{span}\{T(u_{1}),\ldots,T(u_{m})\}$. Thus, by Principle of Local Reflexibility (\cite[Theorem 8.16]{djt}), for every $\epsilon>0$, there is an operator $R \in \mathcal{L}(M;X)$, with $\|R\| \leq 1+\epsilon$, such that
	$$T(u_{j})(R(x_{k}^{**})) = x^{**}_{k}(T(u_{j})),$$
	for all $k \in \{1\ldots,n\}$ e $j\in \{1\ldots,m\}$. So, we get
	\begin{align*}
	\left( \sum_{j=1}^m \left( \sum_{k=1}^n \left| x_{k}^{**} (T(u_j)) \right|^s \right)^{\frac{q}{s}} \right)^{\frac{1}{q}}  &= \left( \sum_{j=1}^m \left( \sum_{k=1}^n \left| T(u_{j})(R(x_{k}^{**})) \right|^s \right)^{\frac{q}{s}} \right)^{\frac{1}{q}}\\
	&\leq D \left\| (R(x_{k}^{**}))_{k=1}^n \right\|_r \left\| (u_j)_{j=1}^m  \right\|_{w,p}\\
	&\leq D(\epsilon+1) \left\| (x_{k}^{**})_{k=1}^n \right\|_r \left\| (u_j)_{j=1}^m  \right\|_{w,p}.
	\end{align*}
	Now, making $\epsilon \rightarrow 0$ and using that $\ell_{r}(\cdot)$ and $\ell_{p}^{w}(\cdot)$ are finitely determined (see \cite[Example 2.2]{G.R}), we conclude that
	$$	\left( \sum_{j=1}^\infty \left( \sum_{k=1}^\infty \left| x_{k}^{**} (T(u_j)) \right|^s \right)^{\frac{q}{s}} \right)^{\frac{1}{q}} \leq D\left\| (x_{k}^{**})_{k=1}^\infty \right\|_r \left\| (u_j)_{j=1}^\infty  \right\|_{w,p}.$$
	So, from Proposition \ref{pq}, we have $T \in \mathcal{W}^{A}_{(s,q,r;p)}(U;X^{*})$ and $w^{A}_{(s,q,r;p)}(T)= \inf\{D: (\ref{R26}) ~\text{holds}\}$.	
\end{proof}

\begin{remark}\rm\label{piAniso} Some straightforward considerations:\\ 
(a) By the Theorem \ref{R60}, the class of anisotropic $(p;s,q,s)$-summing operators  generalizes the class of $(p; m(s; q))$-summing operators defined by Matos in \cite[Definition 2.2]{M.C.} and the class of weakly anisotropic $(s,q,s;p)$-summing operators  generalizes the class of $(s;q)$-mixed operators defined by Pietsch in \cite[Section 20]{Apie}(weakly anisotropic $(s,q,s;p)$-summing operators with $p=q$).\\
(b) With $p$ and $q$ as in Definition \ref{defweak}, it is immediate that $\Pi_{q,p}(U;X)\subseteq \mathcal{W}^{A}_{(s,q,r;p)}(U;X)$. Also, it follows from Proposition \ref{2} that if $T\in\mathcal{W}^{A}_{(s,q,r;p)}(U;X)$ and $S\in\Pi_{q}(X;Y)$, then $S\circ T\in\Pi_{q,p}(U;Y)$.\\
(c) With $p$ and $q$ as in Definition \ref{def2}, we have $\Pi_{p,q}(U;X)\subseteq\Pi^{A}_{(p;s,q,r)}(U;X)$.
\end{remark} 

Notice that if $p \leq q$ we have $\Pi_{q} \subseteq \Pi_{q,p}$ (this is a particular case of \cite[Theorem 10.4]{djt}). Thus, it follows from Remark \ref{piAniso} (b) that $\Pi_{q} \subseteq\mathcal { W}^{A} _{(s,q,r;p)} $. Now we improve this inclusion by proving that $\Pi_{t}\subseteq\mathcal{W}^{A}_{(s,q,r;p)}$ if $t>q$ and satisfies $1/q = 1 / s + 1/t$. 

\begin{proposition}
	Let $1 \leq q<t < \infty$ satisfying $1/q = 1/s + 1/t$. If $T\in\Pi_{t}(U;X)$, then $T\in\mathcal{W}^{A}_{(s,q,r;p)}(U;X)$.
\end{proposition}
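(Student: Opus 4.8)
The plan is to verify characterization~(iv) of Proposition~\ref{pq}. Since $\ell_r(X^*)\subseteq\ell_s(X^*)$ with $\|(x^*_k)_k\|_s\le\|(x^*_k)_k\|_r$ when $r\le s$, it is enough to produce a constant $D$ such that
\[
\left( \sum_{j} \left( \sum_{k} |x^*_k(T(u_j))|^s \right)^{q/s} \right)^{1/q} \le D\,\|(x^*_k)_k\|_s\,\|(u_j)_j\|_{w,p}
\]
for all $(u_j)_j\in\ell_p^w(U)$ and $(x^*_k)_k\in\ell_s(X^*)$. One should resist the temptation to estimate the inner sum crudely by $\|(x^*_k)_k\|_s\,\|T(u_j)\|$: that bound would force $(T(u_j))_j\in\ell_q(X)$, which is strictly stronger than $T\in\Pi_t$ because $q<t$. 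Hence the full anisotropic double sum must be kept intact, and the needed gain must come from the interaction between the two summations.

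The heart of the argument is to factor the $t$-summing operator through an $L_t$-space. By the Pietsch factorization theorem there are a regular probability measure $\mu$ on $(B_{U^*},w^*)$ and an operator $\widehat S$ with $\|\widehat S\|\le\pi_t(T)$ such that $T(u_j)=\widehat S(f_j)$, where $f_j\in L_t(\mu)$ denotes the class of the function $u^*\mapsto u^*(u_j)$. Letting $t'$ be the conjugate exponent of $t$ and choosing $h_k\in L_{t'}(\mu)$ to be a norm-preserving representative of $\widehat S^{*}x^*_k$ (so that $\|h_k\|_{t'}\le\pi_t(T)\|x^*_k\|$), I obtain the integral representation
\[
x^*_k(T(u_j)) = \int_{B_{U^*}} u^*(u_j)\,h_k(u^*)\,d\mu(u^*).
\]

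Now I would apply Minkowski's integral inequality (for the $\ell_q(\ell_s)$-norm) to pull the mixed norm of $\big(x^*_k(T(u_j))\big)_{j,k}$ inside the integral. For each fixed $u^*$ the integrand $\big(u^*(u_j)h_k(u^*)\big)_{j,k}$ is a rank-one array, so its $\ell_q(\ell_s)$-norm factors as $\big(\sum_k|h_k(u^*)|^s\big)^{1/s}\big(\sum_j|u^*(u_j)|^q\big)^{1/q}$, and it remains to bound the two factors. Since $p\le q$ and $u^*\in B_{U^*}$, the second factor is at most $\big(\sum_j|u^*(u_j)|^p\big)^{1/p}\le\|(u_j)_j\|_{w,p}$ pointwise and may be taken out of the integral; for the first factor, Jensen's inequality (as $\mu$ is a probability measure) followed by Minkowski's integral inequality gives
\[
\int_{B_{U^*}}\Big(\sum_k|h_k(u^*)|^s\Big)^{1/s}d\mu(u^*) \le \Big(\int_{B_{U^*}}\big(\textstyle\sum_k|h_k|^s\big)^{t'/s}d\mu\Big)^{1/t'} \le \Big(\sum_k\|h_k\|_{t'}^{s}\Big)^{1/s} \le \pi_t(T)\,\|(x^*_k)_k\|_s .
\]
Combining the two bounds yields the displayed estimate with $D=\pi_t(T)$. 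The main obstacle, and the step that must be checked, is this last use of Minkowski's inequality: interchanging the outer $L_{t'}(\mu)$-norm with the inner $\ell_s$-norm is permitted exactly when $t'\le s$. This holds under the standing hypothesis $q\ge 1$, since $1/t'-1/s=1-1/q\ge 0$, with equality (so that the step is a mere application of Fubini) only when $q=1$; it is precisely the relation $1/q=1/s+1/t$ that forces $t'\le s$ and makes the whole estimate close.
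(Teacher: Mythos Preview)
Your argument contains a genuine error in the last displayed chain of inequalities. The step
\[
\Big(\int\big(\textstyle\sum_k|h_k|^s\big)^{t'/s}\,d\mu\Big)^{1/t'}\;\le\;\Big(\sum_k\|h_k\|_{t'}^{\,s}\Big)^{1/s}
\]
invokes the generalised Minkowski inequality in the wrong direction. When $t'\le s$ the correct statement is $\|(h_k)\|_{L_{t'}(\mu;\ell_s)}\ge\|(h_k)\|_{\ell_s(L_{t'}(\mu))}$: the mixed norm with the \emph{smaller} exponent on the outside is the larger one. A concrete instance makes this visible: with $t'=2$, $s=4$ (so $q=4/3$, $t=2$, all admissible) and $h_1=\mathbf 1_A$, $h_2=\mathbf 1_B$ for disjoint sets of $\mu$-measure $1/2$, the left-hand side equals $1$ while the right-hand side is $2^{-1/4}<1$. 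Since the $h_k$ appear only as Hahn--Banach extensions of $\widehat S^{\,*}x^*_k$, nothing rules out this behaviour, and the bound $\int(\sum_k|h_k|^s)^{1/s}\,d\mu\le\pi_t(T)\|(x^*_k)\|_s$ simply need not hold. Your diagnosis that ``$t'\le s$ is exactly what makes the swap legal'' is therefore backwards; the interchange you need would require $s\le t'$, which under the relation $1/q=1/s+1/t$ happens only in the degenerate case $q=1$.

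The paper sidesteps this obstacle by using a different decomposition. Rather than the Pietsch integral factorisation of $T$, it applies the sequence factorisation \cite[Lemma~2.23]{djt}: for $T\in\Pi_t$ and $(u_j)\in\ell_q^w(U)$ one can write $T(u_j)=\lambda_j x_j$ with $(\lambda_j)\in\ell_t$ and $(x_j)\in\ell_s^w(X)$. The scalar weights $(\lambda_j)$ then separate from the anisotropic double sum via H\"older with $1/q=1/t+1/s$ (the very computation already recorded in the proof of Theorem~\ref{TDR}), and no mixed-norm interchange is required.
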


\begin{proof}  Let $T\in\Pi_{t}(U;X)$ and $(u_j)_{j=1}^\infty \in \ell_{p}^{w}(U)\subseteq\ell_{q}^{w}(U)$. So, there are sequences $(\lambda_j)_{j=1}^\infty\in \ell_t$ and $(x_j)_{j=1}^\infty \in \ell_{s}^{w}(X)$ such that $T(u_j)=\lambda_{j}x_{j}$, for all $j \in \mathbb{N}$ (see \cite[Lemma 2.23]{djt}). In the beginning of the proof of the Theorem \ref{TDR}, we saw that under these conditions $(T(u_j))_{j=1}^\infty=(\lambda_{j}x_{j})_{j=1}^\infty\in \ell_{(s,q,r)}^{\text{A}}(X)$. So, from Definition \ref{defweak}, $T\in\mathcal{W}^{A}_{(s,q,r;p)}(U;X)$.
\end{proof}

The next result provides another characterization for weakly anisotropic summing operators using the operator $\Psi_{x^*}$ defined in Remark \ref{remark1}.

\begin{theorem}\label{R34} The following sentences are equivalent:
	\begin{description}
		\item[(i)] $T\in\mathcal{W}^{A}_{(s,q,r;p)}(U;X)$
		\item[(ii)] $\Psi_{x^*}\circ T\in\Pi_{q,p}(U;\ell_s)$, for all $x^*=(x^*_k)_{k=1}^\infty \in B_{\ell_r(X^*)}$
	\end{description}
Moreover, if {\rm (i)} is true (and therefore both sentences), then $w^{A}_{(s,q,r;p)}(T) = \pi_{q,p}\left( \Psi_{x^*}\circ T\right)$.
\end{theorem}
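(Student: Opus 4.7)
My plan is to translate directly between the two inequalities by recognizing the key identification
\[
\|\Psi_{x^*}(T(u))\|_s \;=\; \Bigl(\sum_{k=1}^\infty |x_k^*(T(u))|^s\Bigr)^{1/s},
\]
valid for every $u\in U$ and $x^*=(x_k^*)_{k=1}^\infty \in \ell_r(X^*)$. Under this identification, the ``outer'' $\ell_q$-sum in the defining inequality of $\mathcal{W}^A_{(s,q,r;p)}$ becomes exactly the defining $(q,p)$-summing inequality for $\Psi_{x^*}\circ T$, so the two conditions are essentially the same modulo a choice of $x^*$.

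For the implication (i)$\Rightarrow$(ii), I would fix $x^* \in B_{\ell_r(X^*)}$ and invoke characterization (iv) of Proposition \ref{pq}: for every $(u_j)_{j=1}^\infty \in \ell_p^w(U)$,
\[
\Bigl(\sum_{j=1}^\infty \|\Psi_{x^*}(T(u_j))\|_s^{q}\Bigr)^{1/q} \leq w^A_{(s,q,r;p)}(T)\,\|x^*\|_r\,\|(u_j)\|_{w,p} \leq w^A_{(s,q,r;p)}(T)\,\|(u_j)\|_{w,p}.
\]
This is precisely the $(q,p)$-summing inequality for $\Psi_{x^*}\circ T \colon U\to\ell_s$, and gives $\pi_{q,p}(\Psi_{x^*}\circ T)\leq w^A_{(s,q,r;p)}(T)$.

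For the converse (ii)$\Rightarrow$(i), I would fix $(u_j)_{j=1}^\infty \in \ell_p^w(U)$ and check that $(T(u_j))_{j=1}^\infty \in \ell^A_{(s,q,r)}(X)$. Given any $(x_k^*)_{k=1}^\infty \in \ell_r(X^*)$, after normalizing we may assume $\|x^*\|_r\leq 1$; then $\Psi_{x^*}\circ T \in \Pi_{q,p}(U;\ell_s)$ applied to $(u_j)$ gives $\sum_j \|\Psi_{x^*}(T(u_j))\|_s^q<\infty$, which rewrites as $\sum_j\bigl(\sum_k|x_k^*(T(u_j))|^s\bigr)^{q/s}<\infty$. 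This is exactly the defining condition for $(T(u_j))\in\ell^A_{(s,q,r)}(X)$, so $T\in\mathcal{W}^A_{(s,q,r;p)}(U;X)$ by Definition \ref{defweak}. For the norm equality (read as $w^A_{(s,q,r;p)}(T)=\sup_{x^*\in B_{\ell_r(X^*)}}\pi_{q,p}(\Psi_{x^*}\circ T)$, which I take to be the intended interpretation), I would take the sup over $x^* \in B_{\ell_r(X^*)}$ of the $(q,p)$-summing inequality; combined with the forward estimate it yields equality.

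The main obstacle is really just a matter of interpretation rather than a computational difficulty: whether the pointwise hypothesis ``$\Psi_{x^*}\circ T \in \Pi_{q,p}$ for each $x^*$'' automatically gives a uniform bound on $\pi_{q,p}(\Psi_{x^*}\circ T)$ as $x^*$ varies in $B_{\ell_r(X^*)}$. This does not need to be proved by hand here, because the equivalence (i)$\Leftrightarrow$(ii) is established from pointwise well-definedness of the induced operator $\widehat{T}\colon\ell_p^w(U)\to \ell^A_{(s,q,r)}(X)$ followed by Proposition \ref{pq}, which is itself a closed graph style result; the uniformity is thus absorbed into the norm equality rather than required as a separate step.
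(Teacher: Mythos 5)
Your proposal is correct and follows essentially the same route as the paper: both directions rest on the identification $\|\Psi_{x^*}(T(u))\|_s=\bigl(\sum_k|x_k^*(T(u))|^s\bigr)^{1/s}$ together with Proposition \ref{pq}, with the forward direction giving $\pi_{q,p}(\Psi_{x^*}\circ T)\leq w^A_{(s,q,r;p)}(T)$ and the converse obtained by normalizing $x^*$. Your reading of the norm equality as a supremum over $x^*\in B_{\ell_r(X^*)}$, and your handling of the converse via well-definedness of $\widehat{T}$ plus the closed-graph content of Proposition \ref{pq} (rather than a hand-built uniform constant), match what the paper's argument implicitly relies on.
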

\begin{proof}
	Let $x^*=(x^*_k)_{k=1}^\infty \in B_{\ell_r(X^*)}$ and suppose that $T\in\mathcal{W}^{A}_{(s,q,r;p)}(U;X)$. So, calling Proposition \ref{pq}, we get
	\begin{align*}
	\left(\sum_{j=1}^{\infty}\left\|\Psi_{x^*}\circ T(u_j)\right\|_{s}^{q}\right)^{1/{q}} &= \left(\sum_{j=1}^\infty \left(\sum_{k=1}^\infty |x^*_{k}(T(u_{j}))|^{s}\right)^{q/s} \right)^	{1/q}\\
	& \le w^{A}_{(s,q,r;p)}(T)  \left\| (u_j)_{j=1}^\infty  \right\|_{w,p},
	\end{align*}
	for every sequence $(u_j)_{j=1}^\infty \in \ell_{p}^{w}(U)$. Thus, $\Psi_{x^*}\circ T\in\Pi_{q,p}(U;\ell_s)$ and $\pi_{q,p}\left( \Psi_{x^*}\circ T\right)  \leq w^{A}_{(s,q,r;p)}(T)$. 
	
	Reciprocally, for any $x^* = (x^*_k)_{k=1}^\infty \in \ell_r(X^*)$ and any $(u_j)_{j=1}^\infty \in \ell_{p}^{w}(U)$, we obtain  
	\begin{align*}
	\frac{1}{\left\| (x^*_k)_{k=1}^\infty \right\|_r} \left(\sum_{j=1}^\infty \left(\sum_{k=1}^\infty |x^*_{k}(Tu_{j})|^{s}\right)^%
	{q/s} \right)^%
	{1/q} & = \left(\sum_{j=1}^{\infty}\left\|\Psi_{\frac{\textbf{x}^*}{\|\textbf{x}^*\|_r}}\circ T(u_j)\right\|_{s}^{q}\right)^{1/{q}}\\
	& \leq  \pi_{q,p}\left( \Psi_{\frac{\textbf{x}^*}{\|\textbf{x}^*\|_r}}\circ T\right) \|(u_{j})_{j=1}^{\infty}\|_{w,p},	\end{align*}
	that is, 
	$$\left(\sum_{j=1}^\infty \left(\sum_{k=1}^\infty |x^*_{k}(Tu_{j})|^{s}\right)^%
	{q/s} \right)^%
	{1/q} \leq  \pi_{q,p}\left( \Psi_{\frac{\textbf{x}^*}{\|\textbf{x}^*\|_r}}\circ T\right) \left\| (x^*_k)_{k=1}^\infty \right\|_r \|(u_{j})_{j=1}^{\infty}\|_{w,p}.$$
	The conclusion follows from Proposition $\ref{pq}$ as well as the equality of norms.
\end{proof}

\begin{corollary}{\rm (a)} If $p_{1} \leq q_{1} < s$, $p_{2} \leq q_{2} < s$, $r \leq s$ and $\frac{1}{p_{1}}-\frac{1}{q_{1}} \leq \frac{1}{p_{2}}-\frac{1}{q_{2}}$, then 
	$$W^{A}_{(s,q_{1},r,p_{1})}(U;X) \overset{1}{\hookrightarrow}W^{A}_{(s,q_{2},r,p_{2})}(U;X).$$
{\rm (b)} If $\Psi_{x^*}\in\Pi_{q,p}(X;\ell_s)$, for all $x^* \in B_{\ell_r(X^*)}$, then $\mathcal { W }^{A}_{(s,q,r;p)}(U;X)=\mathcal{L}(U;X)$.
\end{corollary}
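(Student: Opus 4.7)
My plan is to deduce both parts directly from the characterization in Theorem \ref{R34}, which identifies the class $\mathcal{W}^{A}_{(s,q,r;p)}$ with the requirement that $\Psi_{x^*}\circ T \in \Pi_{q,p}(U;\ell_s)$ for every $x^*=(x^*_k)_{k=1}^\infty \in B_{\ell_r(X^*)}$, together with the corresponding norm identity $w^{A}_{(s,q,r;p)}(T)=\sup_{x^*}\pi_{q,p}(\Psi_{x^*}\circ T)$ extracted from its proof. This reduces both claims to classical statements about $(q;p)$-summing operators.

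For part (a), I would fix an arbitrary $T\in\mathcal{W}^{A}_{(s,q_1,r,p_1)}(U;X)$ and apply Theorem \ref{R34} to get $\Psi_{x^*}\circ T\in\Pi_{q_1,p_1}(U;\ell_s)$ for every $x^*\in B_{\ell_r(X^*)}$, with $\pi_{q_1,p_1}(\Psi_{x^*}\circ T)\le w^{A}_{(s,q_1,r;p_1)}(T)$. The hypothesis $\tfrac{1}{p_1}-\tfrac{1}{q_1}\le \tfrac{1}{p_2}-\tfrac{1}{q_2}$ is exactly the condition of the classical inclusion theorem for absolutely summing operators (see \cite[Theorem 10.4]{djt}), so $\Pi_{q_1,p_1}(U;\ell_s)\overset{1}{\hookrightarrow}\Pi_{q_2,p_2}(U;\ell_s)$. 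Composing these two facts yields $\Psi_{x^*}\circ T\in\Pi_{q_2,p_2}(U;\ell_s)$ with $\pi_{q_2,p_2}(\Psi_{x^*}\circ T)\le\pi_{q_1,p_1}(\Psi_{x^*}\circ T)\le w^{A}_{(s,q_1,r;p_1)}(T)$. Taking the supremum over $x^*\in B_{\ell_r(X^*)}$ and invoking Theorem \ref{R34} in the reverse direction delivers $T\in\mathcal{W}^{A}_{(s,q_2,r,p_2)}(U;X)$ together with the norm inequality $w^{A}_{(s,q_2,r;p_2)}(T)\le w^{A}_{(s,q_1,r;p_1)}(T)$, which is the meaning of $\overset{1}{\hookrightarrow}$.

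For part (b), the argument is even shorter. Given any $T\in\mathcal{L}(U;X)$ and any $x^*\in B_{\ell_r(X^*)}$, the hypothesis says $\Psi_{x^*}\in\Pi_{q,p}(X;\ell_s)$. Since $\Pi_{q,p}$ is an operator ideal, the composition $\Psi_{x^*}\circ T$ belongs to $\Pi_{q,p}(U;\ell_s)$. Because this holds for every $x^*\in B_{\ell_r(X^*)}$, Theorem \ref{R34} forces $T\in\mathcal{W}^{A}_{(s,q,r;p)}(U;X)$, and since $T$ was arbitrary we conclude $\mathcal{W}^{A}_{(s,q,r;p)}(U;X)=\mathcal{L}(U;X)$.

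There is no substantive obstacle here; the only point requiring a small care is that Theorem \ref{R34}'s norm identity is read correctly as a supremum over $x^*\in B_{\ell_r(X^*)}$, so that the inequality $\pi_{q_2,p_2}(\Psi_{x^*}\circ T)\le w^{A}_{(s,q_1,r;p_1)}(T)$ passes uniformly to the sup. Both items are really corollaries of Theorem \ref{R34} plus standard ideal-theoretic facts: the classical inclusion theorem in (a), and the ideal property of $\Pi_{q,p}$ in (b).
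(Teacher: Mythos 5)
Your proposal is correct and follows essentially the same route as the paper: both parts are reduced to Theorem \ref{R34}, with (a) using the classical inclusion theorem $\Pi_{q_1,p_1}\subseteq\Pi_{q_2,p_2}$ from \cite[Theorem 10.4]{djt} and (b) using the ideal property of $\Pi_{q,p}$. Your version merely spells out the norm bookkeeping (reading the norm identity in Theorem \ref{R34} as a supremum over $x^*\in B_{\ell_r(X^*)}$) that the paper leaves implicit.
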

\begin{proof} (a) Notice that the indexes that play an important role in the theorem are $p$ and $q$ and that from the hypotheses it is well-known that $\Pi_{q_1,p_1}(U;\ell_s) \subseteq \Pi_{q_2,p_2}(U;\ell_s)$ (see \cite[Theorem 10.4]{djt}). With this, the Theorem \ref{R34} ensures the result. \\
(b) Follows immediately from the ideal property of $\Pi_{q;p}$ that $\Psi_{x^*}\circ T\in\Pi_{q,p}(U;\ell_s)$ for all $T \in \mathcal{L}(U;X)$ and the theorem does the work.
\end{proof}

Certain inclusion and coincidence relationships involving the sequence classes $\ell^{A}_{(s,q,r)}(\cdot)$ and $\ell^{w}_{p}(\cdot)$ give us coincidence results for our classes of anisotropic summing operators in addition to the one already presented above.

\begin{proposition} Let $2 \leq q < \infty$ be a real number. Then:
\begin{description}
	\item[(a)] If $X$ has cotype $q$, then $\ell^{w}_{1}(X)\subseteq\ell^{A}_{(s,q,r)}(X)$.
	\item[(b)] If $X$ or $U$ have cotype $q$, then $\mathcal{L}(U;X)=\mathcal{W}^{A}_{(s,q,r;1)}(U;X)$.
\end{description}	
\end{proposition}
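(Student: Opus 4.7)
The overall plan is to reduce both parts to the classical implication that a Banach space $Z$ of cotype $q\geq 2$ satisfies $\ell_1^w(Z)\subseteq \ell_q(Z)$ with continuous inclusion; equivalently, $\mathrm{id}_Z$ is $(q,1)$-summing. Once this is in hand, the embedding $\ell_q(Z)\stackrel{\textrm{1}}{\hookrightarrow}\ell^A_{(s,q,r)}(Z)$ from Proposition \ref{2} carries almost all of the structural content.

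For part (a), assuming $X$ has cotype $q$, I would first verify the inclusion $\ell_1^w(X)\subseteq \ell_q(X)$ directly from the cotype inequality. For any finite family $(x_j)_{j=1}^n$ in $X$ and any signs $\epsilon_j=\pm 1$, a Hahn--Banach argument gives $\bigl\|\sum_{j=1}^n\epsilon_j x_j\bigr\|\leq \|(x_j)_{j=1}^n\|_{w,1}$; averaging over Rademacher signs and invoking Kahane's inequality yields $\bigl(\mathbb{E}\bigl\|\sum_j r_j x_j\bigr\|^2\bigr)^{1/2}\leq K\,\|(x_j)_{j=1}^n\|_{w,1}$, and the defining cotype inequality then upgrades this to $\bigl(\sum_j\|x_j\|^q\bigr)^{1/q}\leq C\,\|(x_j)_{j=1}^n\|_{w,1}$. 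Combining with the left embedding $\ell_q(X)\stackrel{\textrm{1}}{\hookrightarrow}\ell^A_{(s,q,r)}(X)$ in Proposition \ref{2} closes part (a).

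For part (b), fix $T\in\mathcal{L}(U;X)$ and $(u_j)_{j=1}^\infty\in\ell_1^w(U)$; by characterization (iii) of Proposition \ref{pq} it suffices to produce a constant $D>0$ with $\|(Tu_j)_{j=1}^n\|_{A(s,q,r)}\leq D\,\|(u_j)_{j=1}^n\|_{w,1}$ for every $n\in\mathbb{N}$. If $X$ has cotype $q$, linear stability of $\ell_1^w(\cdot)$ gives $(Tu_j)_{j=1}^\infty\in\ell_1^w(X)$ with $\|(Tu_j)\|_{w,1}\leq\|T\|\,\|(u_j)\|_{w,1}$, and part (a) completes the bound. If instead $U$ has cotype $q$, the same implication applied to $U$ itself yields $(u_j)\in\ell_q(U)$, hence $(Tu_j)\in\ell_q(X)\stackrel{\textrm{1}}{\hookrightarrow}\ell^A_{(s,q,r)}(X)$ by Proposition \ref{2}.

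The only non-bookkeeping step is the cotype-to-$(q,1)$-summing implication, which is classical (see e.g.\ \cite[Chapter 11]{djt}); I would simply cite it rather than reprove the Rademacher computation, since the remainder of the argument is an immediate assembly of Proposition \ref{2}, Proposition \ref{pq}, and the linear stability of $\ell_1^w(\cdot)$.
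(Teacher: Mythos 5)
Your proof is correct and follows essentially the same route as the paper: both reduce to the classical fact that cotype $q$ makes the identity $(q,1)$-summing (\cite[Theorem 11.17]{djt}) and then push through the embedding $\ell_q(\cdot)\stackrel{\textrm{1}}{\hookrightarrow}\ell^{A}_{(s,q,r)}(\cdot)$ of Proposition \ref{2}. The only cosmetic difference is that in (b) you unwind the two compositions ($T=\mathrm{id}_X\circ T$ and $T=T\circ \mathrm{id}_U$) by hand, where the paper simply invokes the ideal property of $\mathcal{W}^{A}_{(s,q,r;1)}$ from Proposition \ref{R21}.
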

\begin{proof}
	(a) If $X$ has cotype $q$, it follows from \cite[Theorem 11.17]{djt} that the identity operator $id_X:X \rightarrow X$ is $(q,1)$-summing and the Remark \ref{piAniso} (b) ensures that  $id_{X}\in\mathcal{W}^{A}_{(s,q,r;1)}(X;X)$, that is, $\ell^{w}_{1}(X)\subseteq\ell^{A}_{(s,q,r)}(X)$.\\ 
	(b) If $X$ or $U$ have cotype $q$, then (a) ensures that $\ell^{w}_{1}(X)\subseteq\ell^{A}_{(s,q,r)}(X)$ or $\ell^{w}_{1}(U)\subseteq\ell^{A}_{ (s,q,r)}(U)$, i.e. $id_{X}\in\mathcal{W} ^{A}_{(s,q,r;1)}(X;X)$ or $id_{U}\in\mathcal{W}^{A}_{(s,q,r;1)}(U;U)$. The ideal property of $\mathcal{W}^{A}_{(s,q,r;1)}$, guaranteed by Proposition \ref{R21}, proves the result in both cases.
\end{proof}

\begin{proposition}
	The following sentences are equivalent: 
	\begin{description}
		\item[(i)] $\ell^{A}_{(s,q,r)}(U) = \ell^{w}_{q}(U)$;		
		\item[(ii)] $\Pi^{A}_{(q;s,q,r)}(U;X) = \Pi_{q}(U;X)$, for all Banach space $X$;		
		\item[(iii)] $\Pi^{A}_{(q;s,q,r)}(U;\ell_{s}) = \Pi_{q}(U;\ell_{s})$.
	\end{description}
\end{proposition}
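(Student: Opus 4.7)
I would prove the implications in the cyclic order $(i)\Rightarrow(ii)\Rightarrow(iii)\Rightarrow(i)$.

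For $(i)\Rightarrow(ii)$, the argument is a direct unwinding of definitions: an operator $T\in\mathcal L(U;X)$ belongs to $\Pi^A_{(q;s,q,r)}(U;X)$ precisely when $\widehat T$ sends $\ell^A_{(s,q,r)}(U)$ into $\ell_q(X)$, and belongs to $\Pi_q(U;X)$ precisely when $\widehat T$ sends $\ell^w_q(U)$ into $\ell_q(X)$. Under hypothesis (i) these two conditions are identical, so the two classes coincide as sets. The inclusion $\ell^A_{(s,q,r)}(U)\stackrel{1}{\hookrightarrow}\ell^w_q(U)$ from Proposition \ref{2} gives $\pi^A_{(q;s,q,r)}(T)\le\pi_q(T)$, and the open mapping theorem applied to the identity between the two Banach spaces $\ell^A_{(s,q,r)}(U)$ and $\ell^w_q(U)$ (which have the same underlying vector space by (i)) yields the equivalence of norms in the other direction.

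The implication $(ii)\Rightarrow(iii)$ is obtained simply by taking $X=\ell_s$.

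The core of the proof is $(iii)\Rightarrow(i)$. Since Proposition \ref{2} already gives $\ell^A_{(s,q,r)}(U)\subseteq \ell^w_q(U)$, it suffices to reverse this inclusion, and Theorem \ref{R24} reduces the task to showing that $\Psi_{u^*}\in\Pi_q(U;\ell_s)$ for every $u^*=(u^*_k)_{k=1}^\infty\in\ell_r(U^*)$. Fix such a $u^*$ and consider the associated map $\Psi_{u^*}\colon U\to\ell_s$ from Remark \ref{remark1}(a). The plan is to first observe that $\Psi_{u^*}$ automatically lies in $\Pi^A_{(q;s,q,r)}(U;\ell_s)$: indeed, for any $(v_j)_{j=1}^\infty\in\ell^A_{(s,q,r)}(U)$ one has
\begin{equation*}
\left(\sum_{j=1}^\infty\|\Psi_{u^*}(v_j)\|_s^q\right)^{1/q}=\left(\sum_{j=1}^\infty\Bigl(\sum_{k=1}^\infty |u^*_k(v_j)|^s\Bigr)^{q/s}\right)^{1/q}\le \|(v_j)_{j=1}^\infty\|_{A(s,q,r)}\,\|(u^*_k)_{k=1}^\infty\|_r,
\end{equation*}
so $(\Psi_{u^*}(v_j))_{j=1}^\infty\in\ell_q(\ell_s)$, which via Proposition \ref{R12}(iii) exhibits $\Psi_{u^*}\in\Pi^A_{(q;s,q,r)}(U;\ell_s)$. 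Invoking hypothesis (iii), this gives $\Psi_{u^*}\in\Pi_q(U;\ell_s)$, and since $u^*$ was arbitrary in $\ell_r(U^*)$, Theorem \ref{R24} yields $\ell^A_{(s,q,r)}(U)=\ell^w_q(U)$.

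The only subtle point I anticipate is verifying the norm equivalence in $(i)\Rightarrow(ii)$, which I expect to settle via the closed graph/open mapping argument sketched above; the remaining steps are essentially bookkeeping built on Theorem \ref{R24} and the characterization of $\Pi^A_{(q;s,q,r)}$ given in Proposition \ref{R12}.
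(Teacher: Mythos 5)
Your proposal is correct and follows essentially the same route as the paper: the cyclic implications $(i)\Rightarrow(ii)\Rightarrow(iii)\Rightarrow(i)$, with $(iii)\Rightarrow(i)$ handled by showing $\Psi_{u^*}\in\Pi^{A}_{(q;s,q,r)}(U;\ell_{s})$ for each $u^*\in\ell_r(U^*)$ and then invoking Theorem \ref{R24}. The extra open-mapping discussion of norm equivalence in $(i)\Rightarrow(ii)$ is not needed for the statement as given (which asserts only set equalities), but it does no harm.
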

\begin{proof}	
	(i) $\Rightarrow$ (ii) If $\ell^{A}_{(s,q,r)}(U) = \ell^{w}_{q}(U)$, we have
	\begin{align*}
		T \in \Pi^{A}_{(q;s,q,r)}(U;X) &\Leftrightarrow (T(u_{j}))_{j=1}^{\infty} \in \ell_{q}(X), ~\text{whenever}~ (u_{j})_{j=1}^{\infty} \in \ell^{A}_{(s,q,r)}(U) = \ell^{w}_{q}(U)
	\end{align*}
	and so $\Pi^{A}_{(q;s,q,r)}(U;X) = \Pi_{q}(U;X)$, for all Banach space $X$.\\
	(ii) $\Rightarrow$ (iii). Just take  $X= \ell_{s}$. \\
	(iii) $\Rightarrow$ (i). If $x^{*}=(x^{*}_{k})_{k=1}^{\infty} \in \ell_{r}(U^{*})$, we have $\Psi_{x^*} \in \Pi^{A}_{(q;s,q,r)}(U;\ell_{s})$, since  $$(u_{j})_{j=1}^{\infty} \in \ell^{A}_{(s,q,r)}(U) \Rightarrow 
	(\Psi_{x^*}(u_{j}))_{j=1}^{\infty} = ((x^{*}_{k}(u_{j})_{k=1}^{\infty}))_{j=1}^{\infty} \in \ell_{q}(\ell_{s}).$$ 
	Then, the hypothesis tells us that $\Psi_{x^*} \in \Pi_{q}(U;\ell_{s})$ and the Theorem \ref{R24} ensures that  $\ell^{A}_{(s,q,r)}(U) = \ell^{w}_{q}(U)$.	
\end{proof}

Now we present two Pietsch domination-type theorems for the classes of anisotropic summing operators. The abstract approach present in \cite{ps} will be used for this purpose and let us lay out a preparation for using it.

Let $X$, $Y$ and $E$ be (arbitrary) non-void sets, $\mathcal{H}$ be a non-void family of mappings from $X$ to $Y$, $G$ be a Banach space and $K$ be a compact Hausdorff topological space. Let
$$R: K \times E \times G \longrightarrow [0,\infty) ~\text{and}~ S: \mathcal{H} \times E \times G \longrightarrow [0,\infty)$$
an arbitrary applications such that
$$\begin{array}{cccc}
R_{x,b} \ : & \! K  & \! \longrightarrow
& \! [0,\infty) \\
& \! \varphi & \! \longmapsto
& \! R_{x,b}(\varphi) = R(\varphi,x,b)
\end{array}
$$
is continuous. If $p \in (0,\infty)$, we say that $f \in \mathcal{H}$ is $RS$-abstract $p$-summing if there is a constant $C>0$ such that
\begin{equation}\label{R43}
\left(\sum_{j=1}^{m}S(f,x_{j},b_{j})^{p} \right)^{\frac{1}{p}} \leq C \sup_{\varphi \in K}  \left(\sum_{j=1}^{m}R(\varphi,x_{j},b_{j})^{p} \right)^{\frac{1}{p}},
\end{equation}
for all $m \in \mathbb{N}$, $x_{1},\ldots,x_{m} \in E$ and $b_{1},\ldots,b_{m} \in G$. The infimum of all the constants $C$ satisfying \eqref{R43} is denoted by $\pi_{RS,p}(f)$. Under these conditions, we can state the following result:

\begin{theorem}{\rm \cite[Theorem 3.1]{ps}}\label{R46}
	A map $f \in \mathcal{H}$ is RS-abstract $p$-summing if and only if there are a constant $C>0$ and a regular probability measure $\mu$ in the borelians of $K $ such that
	$$S(f,x,b) \leq C\left(\int_{K}R(\varphi,x,b)^{p}d\mu(\varphi) \right)^{\frac{1}{p}},$$ 
	for all $x \in E$ and all $b \in G$. Also,  $\pi_{RS,p}(f) = \inf\{C: C  \text{ {\rm satisfies the above inequality}}\}$.
\end{theorem}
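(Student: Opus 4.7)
My plan is a classical Pietsch-type argument: the easy direction by direct integration, and the main direction by Hahn-Banach separation in $C(K)$ followed by the Riesz representation theorem.

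For sufficiency, assume that $(C,\mu)$ satisfies the domination inequality. Given any finite family $(x_j,b_j)_{j=1}^{m}$ in $E\times G$, I would raise the domination to the $p$-th power, sum over $j$, swap sum and integral (all terms are nonnegative), bound the integrand pointwise by its supremum over $\varphi\in K$, and take the $p$-th root. This reproduces \eqref{R43} with the same constant $C$, hence $\pi_{RS,p}(f)\le \inf C$.

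For necessity, set $\pi:=\pi_{RS,p}(f)$ and introduce the convex cone
\[
\mathcal{Q} := \Bigl\{\, \varphi\mapsto \sum_{j=1}^{m}\bigl[\pi^{p}R(\varphi,x_j,b_j)^{p} - S(f,x_j,b_j)^{p}\bigr] \,:\, m\in\mathbb{N},\ x_j\in E,\ b_j\in G \,\Bigr\} \subset C(K),
\]
which indeed lies in $C(K)$ thanks to the continuity hypothesis on each $R_{x,b}$. The definition of $\pi$ translates precisely into $\sup_{\varphi\in K}q(\varphi)\ge 0$ for every $q\in\mathcal{Q}$, so $\mathcal{Q}$ is disjoint from the nonempty open convex set $\mathcal{N}:=\{h\in C(K):\sup_{\varphi} h<0\}$. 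Hahn-Banach separation then provides a nonzero $\Lambda\in C(K)^{*}$ and $\alpha\in\mathbb{R}$ with $\Lambda(h)<\alpha\le \Lambda(q)$ for $h\in\mathcal{N}$, $q\in\mathcal{Q}$. Using $0\in\mathcal{Q}$ and $-t\mathbf{1}\in\mathcal{N}$ for every $t>0$, one shows $\alpha=0$, $\Lambda(\mathbf{1})>0$, and (from the fact that $-h-\varepsilon\mathbf{1}\in\mathcal{N}$ for every $h\ge 0$ and $\varepsilon>0$, letting $\varepsilon\to 0$) the positivity of $\Lambda$. By Riesz representation, $\Lambda$ is integration against a positive regular Borel measure $\nu$ on $K$, and the probability $\mu_{0}:=\nu/\nu(K)$ satisfies, by applying $\Lambda\ge 0$ to the singleton element $q(\varphi)=\pi^{p}R(\varphi,x,b)^{p}-S(f,x,b)^{p}\in\mathcal{Q}$,
\[
S(f,x,b)^{p}\le \pi^{p}\int_{K} R(\varphi,x,b)^{p}\, d\mu_{0}(\varphi).
\]
Taking $p$-th roots yields the domination with $C=\pi$, so $\inf C\le \pi_{RS,p}(f)$ and the equality of constants follows.

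The main obstacle I expect lies in the delicate chain of technical verifications surrounding the separation step: that the separating hyperplane can be chosen to pass through the origin, and that the associated functional is positive with nonzero mass on $\mathbf{1}$. These facts follow from the cone structure of $\mathcal{Q}$ together with the observation that $\mathcal{N}$ absorbs arbitrarily small negative perturbations of any nonnegative continuous function, but they require care precisely because the continuity of $R_{x,b}$ in $\varphi$ is the sole hypothesis connecting $\mathcal{Q}$ to $C(K)$, with no structural assumptions on $X$, $Y$, $E$, or $G$ to fall back on.
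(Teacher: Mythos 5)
First, a point of reference: the paper does not prove this statement at all --- it is quoted verbatim from \cite[Theorem 3.1]{ps} --- so your argument can only be measured against the proof in that reference. Your skeleton is the classical Pietsch scheme (integration for the easy direction; Hahn--Banach separation in $C(K)$ plus Riesz representation for the hard one), and the sufficiency direction, as well as the post-separation bookkeeping ($\alpha=0$, positivity of $\Lambda$, $\Lambda(\mathbf{1})>0$), are sound. The gap is at the decisive step: the set $\mathcal{Q}$ you introduce is closed under addition (concatenate the finite families), but it is \emph{neither convex nor a cone} under the stated hypotheses. The only assumption tying $R$ and $S$ to anything is the continuity of $R_{x,b}$ in $\varphi$; there is no homogeneity that lets you realize $\lambda\bigl(\pi^{p}R(\cdot,x,b)^{p}-S(f,x,b)^{p}\bigr)$ for $\lambda>0$ as another element of $\mathcal{Q}$ (if $E$ and $G$ are singletons, $\mathcal{Q}$ is a discrete additive semigroup). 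Consequently the separation of $\mathcal{Q}$ from the open convex set $\mathcal{N}$ does not apply as written, and you also invoke the cone structure later to force $\alpha=0$ and $\Lambda\ge 0$ on $\mathcal{Q}$. This is not a cosmetic issue: removing the homogeneity hypotheses of the earlier ``unified'' domination theorem is precisely the content of the result you are proving, so the step you gloss over is the whole point.

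The gap is repairable, but the repair is a genuine missing idea. One option is to replace $\mathcal{Q}$ by the convex cone it generates and to check that this larger set is still disjoint from $\mathcal{N}$: for nonnegative rational weights $m_{i}/N$ the combination $\sum_{i}(m_{i}/N)q_{i}$ equals $N^{-1}$ times a sum of repeated elements of $\mathcal{Q}$, hence has nonnegative supremum, and arbitrary nonnegative weights follow by sup-norm approximation because $\mathcal{N}$ is open and $\sup$ is $1$-Lipschitz. With that lemma in hand, the rest of your argument goes through verbatim and yields $C=\pi_{RS,p}(f)$. The published proof in \cite{ps} takes a different route that avoids the issue entirely: it introduces auxiliary maps $\tilde{R}(\varphi,(x,b),t)=|t|\,R(\varphi,x,b)$ and $\tilde{S}(f,(x,b),t)=|t|\,S(f,x,b)$ on an enlarged domain, which \emph{are} homogeneous in the new parameter, applies the earlier unified Pietsch domination theorem to them, and recovers the stated result by restricting to $t=1$. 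Either you supply the convexification lemma or you perform such a homogenization; as it stands, your proof asserts rather than proves the property on which everything hinges.
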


At this point we are able to show our results.

\begin{theorem} Let $U$ and $X$ be Banach spaces and $T \in \mathcal{L}(U;X)$. The operator $T$ is weakly anisotropic $(s,p,r;p)$-summing if and only if there are a positive constant $C$ and a regular probability measure $\mu$ in the borelians of $B_{U ^{*}}$, with the weak star topology, such that
	\begin{equation} \label{KT}
	\|\Psi_{x^*}(T(u))\|_{s} \leq C\left(
	{\displaystyle\int\limits_{B_{U^{*}}}}
	\left\vert \varphi(u)\right\vert ^{p}d\mu(\varphi)\right)  ^{\frac{1}{p}}, 
	\end{equation}
	for all $u \in U$ and any $x^*=(x^*_k)_{k=1}^\infty \in B_{\ell_r(X^*)}$. Also, $w^{A}_{(s,p,r;p)}\left(T\right) = \inf\{C: C \text{ {\rm satisfies} \eqref{KT}}\}$.
\end{theorem}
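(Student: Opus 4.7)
The plan is to combine Theorem \ref{R34} with the abstract Pietsch domination theorem \ref{R46}, applied separately to each operator $\Psi_{x^*}\circ T\colon U\to\ell_s$ for $x^*\in B_{\ell_r(X^*)}$. The key observation is that Theorem \ref{R34} already reduces the weak anisotropic summability of $T$ to the simultaneous $p$-summability of the family $\{\Psi_{x^*}\circ T\}_{x^*\in B_{\ell_r(X^*)}}$, with the \emph{uniform} bound $\pi_p(\Psi_{x^*}\circ T)\le w^A_{(s,p,r;p)}(T)$, so it will suffice to apply the abstract Pietsch framework pointwise in $x^*$.

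To set up Theorem \ref{R46} for a fixed $x^*\in B_{\ell_r(X^*)}$ I would take $E=U$, $G=\{0\}$, $K=B_{U^*}$ endowed with the weak-$*$ topology (compact by Banach--Alaoglu), $R(\varphi,u,0)=|\varphi(u)|$ (which is obviously continuous on $K$), and $S(\Psi_{x^*}\circ T,u,0)=\|\Psi_{x^*}(T(u))\|_s$. With this data, being $RS$-abstract $p$-summing is exactly classical $p$-summability of $\Psi_{x^*}\circ T$, so the theorem produces a regular probability measure $\mu$ on $B_{U^*}$ together with a constant bounded by $w^A_{(s,p,r;p)}(T)$ such that \eqref{KT} holds for every $u\in U$. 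Performing this construction for each $x^*\in B_{\ell_r(X^*)}$ with the same upper bound $C=w^A_{(s,p,r;p)}(T)$ takes care of the ``only if'' direction.

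For the converse I would reverse the steps: if \eqref{KT} holds for every $x^*\in B_{\ell_r(X^*)}$ with a common constant $C$, then the ``if'' half of Theorem \ref{R46}, which in our setup is the classical Pietsch domination, forces $\Psi_{x^*}\circ T\in\Pi_p(U;\ell_s)$ with $\pi_p(\Psi_{x^*}\circ T)\le C$ for every such $x^*$. Theorem \ref{R34} then returns $T\in\mathcal{W}^A_{(s,p,r;p)}(U;X)$ with $w^A_{(s,p,r;p)}(T)\le C$. Taking the infimum of admissible constants on both ends yields the norm identity $w^A_{(s,p,r;p)}(T)=\inf\{C\colon \eqref{KT}\text{ holds}\}$.

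The main obstacle is conceptual rather than computational: one must organise the argument so that a single constant $C$ works uniformly across $x^*$, even though the measure $\mu$ produced by Pietsch is in general permitted to depend on $x^*$. The uniformity in $C$ is precisely what Theorem \ref{R34} supplies, via the uniform estimate $\pi_p(\Psi_{x^*}\circ T)\le w^A_{(s,p,r;p)}(T)$; once this observation is in place, the proof reduces to one invocation of classical Pietsch per $x^*$, with no delicate estimate required.
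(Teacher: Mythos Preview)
Your argument has a genuine gap in the ``only if'' direction. The theorem asserts the existence of a \emph{single} regular probability measure $\mu$ on $B_{U^*}$ such that \eqref{KT} holds for every $u\in U$ and every $x^*\in B_{\ell_r(X^*)}$; the quantifier order is $\exists C,\mu\ \forall u,x^*$. By applying the Pietsch domination theorem separately to each $\Psi_{x^*}\circ T$, you obtain for every $x^*$ a measure $\mu_{x^*}$ (and a constant bounded by $w^A_{(s,p,r;p)}(T)$) satisfying the inequality for all $u$. You explicitly note that $\mu$ ``is in general permitted to depend on $x^*$'' and argue only for uniformity of the constant, but this proves a strictly weaker statement than the one claimed: nothing in your argument glues the family $\{\mu_{x^*}\}$ into one measure.

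The paper avoids this by exploiting the full strength of the abstract framework of Theorem \ref{R46}: it takes $G=B_{\ell_r(X^*)}$ as the auxiliary parameter set and sets $S(T,u,x^*)=\|\Psi_{x^*}(T(u))\|_s$, $R(\varphi,u,x^*)=|\varphi(u)|$. Because $R$ does not depend on the $G$-coordinate and the $RS$-abstract $p$-summing inequality is precisely the characterisation (vi) of Proposition \ref{pq} (with $q=p$), one invocation of Theorem \ref{R46} produces a single measure $\mu$ valid simultaneously for all pairs $(u,x^*)$. Your route through Theorem \ref{R34} and the classical Pietsch theorem is perfectly adequate for the converse implication and for the inequality $w^A_{(s,p,r;p)}(T)\le\inf\{C\}$, but to recover the full statement you must either invoke the abstract theorem with the parameter $x^*$ built into $G$, or supply a separate compactness/convexity argument that selects one measure working for all $x^*$ at once.
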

\begin{proof}
	Initially, consider
	$\mathcal{H} = \mathcal{L}(U;X),~ E=U,~ G=B_{\ell_{r}(X^{*})}$ and $K=B_{U^{*}}$,
	where $K$ is equipped with the weak star topology. Also, if $T \in \mathcal{L}(U;X),$ $u \in U$, $x^{*} = (x^{*}_{k})_ {k=1}^{\infty} \in B_{\ell_{r}(X^{*})}$ and $\varphi \in B_{U^{*}}$, we can define
	$$S(T,u,x^{*}) = \|\Psi_{x^*}(T(u))\|_{s}$$
	and
	$$R(\varphi,u,x^{*}) =  | \varphi(u)|.$$
	It is immediate to verify that the application $R_{u,x^{*}}$ is continuous and so $T$ is $RS$-abstract $p$-summing if and only if $T$ is weakly anisotropic $(s,p,r;p)$-summing. The Theorem \ref{R46} ensures the result. 	
\end{proof}

In what follows we use this well-known duality result: the spaces $\ell_{r}(U^*)$ and $(\ell_{r^{*}}(U))^{*}$, with $1=1/r + 1/r^{*}$,  are isometrically isomorphic by the application $x^{*} = (x_{k}^{*})_{k=1}^{\infty} \in \ell_{r}(U^*) \mapsto \varphi_{x^{*}} \in (\ell_{r^{*}}(U))^{*}$, where $\varphi_{x^{*}}((u_{j})_{j=1}^{\infty}) = \sum_{j=1}^{\infty}x_{j}^{*}(u_{j})$, for all $(u_{j})_{j=1}^{\infty} \in \ell_{r^{*}}(U)$. 

\begin{theorem}\label{TeoDomPiet} 
	Let $U$ and $X$ be Banach spaces and $T \in \mathcal{L}(U;X)$. The operator $T$ is anisotropic $(p;s,p,r)$-summing if and only if there are a positive constant $C$ and a regular probability measure $\mu$ in the borelians of $B_{(\ell_{r^{*}}(U))^{*}}$, with the weak star topology, such that
	\begin{align}\label{KL2}
	\|T(u)\| \leq C \left(\int_{B_{(\ell_{r^{*}}(U))^{*}}} \left( \sum_{k=1}^{\infty} |\varphi_{ x^{*}}(u\cdot e_{k})|^{s}\right)^{\frac{p}{s}}  d\mu(\varphi_{ x^{*}})  \right)^{\frac{1}{p}},
	\end{align}
	for every $u \in U$. Furthermore, $\pi^{A}_{(p;s,p,r)}\left(
	T\right)  = \inf\{C: C  \text{ {\rm satisfies} }  \eqref{KL2}\}$.
\end{theorem}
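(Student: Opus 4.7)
The plan is to deduce the theorem directly from the abstract Pietsch domination theorem (Theorem \ref{R46}), in the same spirit as the previous theorem but with a different compact set $K$ and a different choice of $R$. Concretely, I would take $\mathcal{H} = \mathcal{L}(U;X)$, $E = U$, $G$ a singleton (no $b$-dependence is needed), and $K = B_{(\ell_{r^{*}}(U))^{*}}$ equipped with the weak star topology, which is compact Hausdorff by Banach--Alaoglu. For $T \in \mathcal{H}$, $u \in U$ and $\varphi_{x^{*}} \in K$, define
\[
S(T,u) := \|T(u)\| \qquad \text{and} \qquad R(\varphi_{x^{*}}, u) := \left(\sum_{k=1}^{\infty}\bigl|\varphi_{x^{*}}(u \cdot e_{k})\bigr|^{s}\right)^{1/s}.
\]

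The crux is to match the $RS$-abstract $p$-summing inequality \eqref{R43} with condition (iii) of Proposition \ref{R12}. Using the duality isomorphism $\ell_{r}(U^{*}) \cong (\ell_{r^{*}}(U))^{*}$ recalled right before the statement, together with the identity $\varphi_{x^{*}}(u \cdot e_{k}) = x^{*}_{k}(u)$, one obtains
\[
\sup_{\varphi_{x^{*}} \in K}\left(\sum_{j=1}^{m} R(\varphi_{x^{*}},u_{j})^{p}\right)^{1/p} = \|(u_{j})_{j=1}^{m}\|_{A(s,p,r)}.
\]
Combined with Proposition \ref{R12}, this gives that $T$ is $RS$-abstract $p$-summing if and only if $T \in \Pi^{A}_{(p;s,p,r)}(U;X)$, and that $\pi_{RS,p}(T) = \pi^{A}_{(p;s,p,r)}(T)$. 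Once this identification is in place, feeding the data into Theorem \ref{R46} delivers at once the regular probability measure $\mu$ on the Borelians of $K$ and the constant $C$ realizing \eqref{KL2}, together with the asserted equality of the infima.

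The step I expect to be the main obstacle is the continuity hypothesis of Theorem \ref{R46}, namely that for each fixed $u \in U$ the function $R_{u} : K \to [0,\infty)$ is weak star continuous. For every $k$ the evaluation $\varphi_{x^{*}} \mapsto \varphi_{x^{*}}(u \cdot e_{k}) = x^{*}_{k}(u)$ is weak star continuous on $K$, so each partial sum $\sum_{k=1}^{N}|\varphi_{x^{*}}(u \cdot e_{k})|^{s}$ belongs to $C(K)$; promoting this to the full series is the delicate point. I would handle it by combining the pointwise bound $|x^{*}_{k}(u)| \leq \|x^{*}_{k}\|\,\|u\|$ with $r \leq s$ and $\|x^{*}\|_{r} \leq 1$ on $K$ to obtain a tail estimate that transfers the continuity of the partial sums to $R_{u}$ (at worst, to lower semicontinuity, which is already enough for the measure-theoretic separation argument underlying Theorem \ref{R46}).
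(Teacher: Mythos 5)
Your proposal follows exactly the route of the paper's own proof: the same choices $\mathcal{H}=\mathcal{L}(U;X)$, $E=U$, $G$ a singleton, $K=B_{(\ell_{r^*}(U))^*}$ with the weak star topology, the same $S(T,u)=\|T(u)\|$ and $R(\varphi_{x^*},u)=\bigl(\sum_k|\varphi_{x^*}(u\cdot e_k)|^s\bigr)^{1/s}$, the same identification of the $RS$-abstract $p$-summing inequality with Proposition \ref{R12} via the duality $\ell_r(U^*)\cong(\ell_{r^*}(U))^*$, and the same appeal to Theorem \ref{R46}. On all of those points you and the paper agree, and that part of the argument is fine.

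The one place where you go beyond the paper is the continuity of $R_u$, and there your proposed fix does not work as described. The tail bound $\sum_{k>N}|x_k^*(u)|^s\le\|u\|^s\bigl(\sum_{k>N}\|x_k^*\|^r\bigr)^{s/r}$ is not uniform over $K$: for any $N$ the element $x^*=x_0^*\cdot e_{N+1}$ with $\|x_0^*\|=1$ has tail equal to $1$, so the partial sums do not converge uniformly and no continuity is transferred. In fact $R_u$ is genuinely \emph{not} weak star continuous: already for $U=\mathbb{K}$, $u=1$, the sequence $e_n\in B_{\ell_r}$ converges weak star to $0$ while $R_1(e_n)=1\not\to 0=R_1(0)$. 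What survives is lower semicontinuity (an increasing supremum of continuous partial sums), but your parenthetical claim that lower semicontinuity ``is already enough'' for Theorem \ref{R46} is exactly the point that needs proof and is not obvious: the separation argument behind that theorem is carried out in $C(K)$ and requires the functions $\varphi\mapsto\sum_j\bigl(S(f,u_j)^p-C^pR(\varphi,u_j)^p\bigr)$ to be continuous; with $R$ only l.s.c.\ these are merely u.s.c., and the weak star limit of measures goes the wrong way for l.s.c.\ integrands. To be clear, the paper has the very same gap --- it asserts that continuity of $R_{u,0}$ ``is a simple task'', which is false as stated --- so you have not missed anything the paper supplies; but if you want a complete proof you must either establish a version of Theorem \ref{R46} valid for lower semicontinuous $R_{x,b}$, or replace $K$ by a compact set on which $R_u$ is actually continuous. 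Note that this issue does not arise in the preceding domination theorem of the paper, where $R(\varphi,u,x^*)=|\varphi(u)|$ is a single evaluation and hence weak star continuous.
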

\begin{proof} Using the Proposition \ref{R12} and the isomorphism commented above, we have $T \in \Pi_{(p;s,p,r)}(U;X)$ if and only if there is a constant $C$ such that
	\begin{align*}
		\left(\sum_{j=1}^{\infty}|T(u_{j})|^{p} \right)^{\frac{1}{p}} & \leq C \sup_{(x_{k}^{*})_{k=1}^{\infty} \in B_{\ell_{r}(U^{*})}} \left(\sum_{j=1}^{\infty}\left(\sum_{k=1}^{\infty}|x_{k}^{*}(u_{j}) |^{s}\right)^{\frac{p}{s} } \right)^{\frac{1}{p}}\\
		&=C\sup_{\varphi_{x^{*}} \in B_{(\ell_{r^{*}}(U))^{*}}} \left(\sum_{j=1}^{\infty}\left(\sum_{k=1}^{\infty}|\varphi_{ x^{*}}(u_{j}\cdot e_{k}) |^{s}\right)^{\frac{p}{s} } \right)^{\frac{1}{p}}
	\end{align*}
	So, consider $\mathcal{H} = \mathcal{L}(U;X), ~ E=U, ~ G=\{0\}$ and $K = (B_{\ell_{r^{*}}(U)})^{*}$,
	where $K$ is equipped with the weak star topology. Also, for any $T \in \mathcal{L}(U;X)$, for all $u \in U$ and all $ x^{*}=(x^*_k)_{k=1}^\infty \in \ell_r(U^*)$, consider
	$$S(T,u,0) = \|T(u)\| ~\text{and}~ R(\varphi_{ x^{*}},u,0)=\displaystyle\left(\sum_{k=1}^{\infty}|\varphi_{ x^{*}}(u\cdot e_{k})|^{s} \right)^{\frac{1}{s}}.$$ 
	It is a simple task to prove that $R_{u,0}$ is continuous, for all $u \in U$, and it follows that $T$ is anisotropic $(p;s,p,r)$-summing if and only if is $RS$-abstract $p$-summing.  The result is assured by the Theorem \ref{R46}.	
\end{proof}

Thanks to the monotonicity of the $L_{p}$ norms, we obtain an immediate consequence of Theorem \ref{TeoDomPiet}.

	\begin{corollary}
	If $1 \leq p_{1} \leq p_{2} < \infty $, then $\Pi_{(p_{1};s,p_{1},r)}(U;X) \overset{1}{\hookrightarrow} \Pi_{(p_{2};s,p_{2},r)}(U;X)$.
	\end{corollary}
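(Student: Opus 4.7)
The plan is to apply Theorem \ref{TeoDomPiet} in both directions and bridge them via the monotonicity of $L_p$-norms on a probability space. Take $T \in \Pi^{A}_{(p_1;s,p_1,r)}(U;X)$. By the ``only if'' direction of Theorem \ref{TeoDomPiet}, for every $\varepsilon > 0$ there exist a constant $C$ with $C \leq \pi^{A}_{(p_1;s,p_1,r)}(T) + \varepsilon$ and a regular probability measure $\mu$ on the borelians of $B_{(\ell_{r^{*}}(U))^{*}}$ (in the weak-star topology) satisfying
\[
\|T(u)\| \leq C \left( \int_{B_{(\ell_{r^{*}}(U))^{*}}} g_u(\varphi_{x^{*}})^{p_1}\, d\mu(\varphi_{x^{*}}) \right)^{1/p_1}, \qquad u \in U,
\]
where I abbreviate $g_u(\varphi_{x^{*}}) := \left( \sum_{k=1}^{\infty} |\varphi_{x^{*}}(u \cdot e_k)|^{s} \right)^{1/s}$.

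Next, I would invoke the classical fact that, for any probability measure $\mu$ and exponents $0 < p_1 \leq p_2 < \infty$, one has $\|f\|_{L^{p_1}(\mu)} \leq \|f\|_{L^{p_2}(\mu)}$ for every nonnegative measurable $f$ (a direct consequence of Jensen's inequality applied to the convex map $t \mapsto t^{p_2/p_1}$). Applying this with $f = g_u$ lifts the previous bound to
\[
\|T(u)\| \leq C \left( \int_{B_{(\ell_{r^{*}}(U))^{*}}} g_u(\varphi_{x^{*}})^{p_2}\, d\mu(\varphi_{x^{*}}) \right)^{1/p_2}, \qquad u \in U.
\]

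Finally, I would invoke the ``if'' direction of Theorem \ref{TeoDomPiet}, now read with exponent $p_2$, to conclude that $T \in \Pi^{A}_{(p_2;s,p_2,r)}(U;X)$ and $\pi^{A}_{(p_2;s,p_2,r)}(T) \leq C \leq \pi^{A}_{(p_1;s,p_1,r)}(T) + \varepsilon$. Letting $\varepsilon \to 0$ then yields the contractive inclusion $\overset{1}{\hookrightarrow}$. I do not foresee any real obstacle: the only delicate point is that the integrand keeps its form $g_u(\varphi_{x^{*}})^{p}$ when the exponent is raised from $p_1$ to $p_2$, which is exactly what the Jensen-type monotonicity grants, so the Pietsch-style characterization with $p_2$ applies verbatim with the same measure $\mu$ and the same constant $C$.
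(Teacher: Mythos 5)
Your proposal is correct and is exactly the argument the paper intends: the paper proves this corollary in one line as an ``immediate consequence of Theorem \ref{TeoDomPiet}, thanks to the monotonicity of the $L_p$ norms,'' which is precisely your two-directional application of the domination theorem bridged by $\|g_u\|_{L^{p_1}(\mu)} \leq \|g_u\|_{L^{p_2}(\mu)}$ on a probability space. Your handling of the norm estimate via the $\varepsilon$-approximation of the infimum is a careful and correct way to obtain the contractive inclusion.
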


To end the paper we show that the classes of anisotropic summing operators fit into the operator ideal structure and in addition they have the property of injectivity of ideals.

\begin{proposition}\label{R21}
	The pairs $\left( \mathcal{W}^{A}_{(s,q,r;p)};w^{A}_{(s,q,r;p)}(\cdot)\right)$ and $\left( \Pi^{A}_{(p;s,q,r)};\pi^{A}_{(p;s,q,r)}(\cdot)\right)$ are Banach ideals of operators.
\end{proposition}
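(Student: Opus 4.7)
The strategy is to invoke the abstract framework of sequence classes from \cite{G.R}. The key observation is that both classes under study are defined as the operators $T \in \mathcal{L}(U;X)$ for which the induced operator $\widehat{T}$ is bounded between two sequence classes: $\ell_{p}^{w}(\cdot) \to \ell^{A}_{(s,q,r)}(\cdot)$ for $\mathcal{W}^{A}_{(s,q,r;p)}$, and $\ell^{A}_{(s,q,r)}(\cdot) \to \ell_{p}(\cdot)$ for $\Pi^{A}_{(p;s,q,r)}$. Moreover, the norms $w^{A}_{(s,q,r;p)}(T)$ and $\pi^{A}_{(p;s,q,r)}(T)$ coincide with $\|\widehat{T}\|$, as can be read off from the characterizations in Propositions \ref{pq} and \ref{R12}. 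In this setting, \cite{G.R} provides a general result ensuring that such a construction produces a Banach operator ideal, provided both sequence classes involved are linearly stable and finitely determined.

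The first step is therefore to verify this hypothesis. The sequence classes $\ell_{p}^{w}(\cdot)$ and $\ell_{p}(\cdot)$ are standard examples of linearly stable and finitely determined sequence classes (see \cite[Example 3.3]{G.R}), while Proposition \ref{SeqClass} establishes these same properties for $\ell^{A}_{(s,q,r)}(\cdot)$. Applying the abstract theorem then yields the conclusion for both pairs simultaneously.

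For readers preferring an explicit verification, the ideal property follows from linear stability via the factorization $\widehat{S \circ T \circ R} = \widehat{S} \circ \widehat{T} \circ \widehat{R}$ combined with $\|\widehat{R}\|=\|R\|$ and $\|\widehat{S}\|=\|S\|$; rank-one containment and the normalization $\|u^{*}\otimes x\|_{\mathcal{I}}=\|u^{*}\|\|x\|$ follow easily from the isometric embedding $x \mapsto x \cdot e_{1}$ of Remark \ref{7}; and the domination $\|T\| \le w^{A}_{(s,q,r;p)}(T)$ (similarly for $\pi^{A}_{(p;s,q,r)}$) is a direct consequence of the chain of inclusions from Proposition \ref{2}. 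The main obstacle is the completeness of the ideal norms: given a Cauchy sequence $(T_{n})$ in the ideal, one first extracts an operator-norm limit $T$ (using the domination above), then for each fixed finite tuple of vectors one passes to the limit in the characterizing inequality (Proposition \ref{pq}(vi) or Proposition \ref{R12}(iii)), and finally the finitely-determined property of the target sequence class upgrades these uniform estimates on finite tuples back to a bound on the full ideal norm, giving $T_{n} \to T$ in the ideal topology.
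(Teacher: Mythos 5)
Your overall strategy is the same as the paper's (reduce everything to the sequence-class machinery of \cite{G.R}), but you have misstated the hypothesis of the abstract theorem and, in doing so, skipped the one verification that actually requires work specific to this setting. The result you want is \cite[Theorem 3.6]{G.R}, whose hypotheses are: both sequence classes are \emph{linearly stable} \textbf{and} the domain class is contained in the target class \emph{at the scalar level with norm one}, i.e.\ $X(\mathbb{K}) \overset{1}{\hookrightarrow} Y(\mathbb{K})$. ``Finitely determined'' is not the relevant hypothesis here. The scalar inclusion is precisely what the paper's proof spends its effort on: by Theorem \ref{TDR} one has $\ell^{A}_{(s,q,r)}(\mathbb{K}) = \ell_q$, so for $\mathcal{W}^{A}_{(s,q,r;p)}$ one needs $\ell_p^w(\mathbb{K}) = \ell_p \overset{1}{\hookrightarrow} \ell_q$, which holds because $p \le q$ in Definition \ref{defweak}, and for $\Pi^{A}_{(p;s,q,r)}$ one needs $\ell_q \overset{1}{\hookrightarrow} \ell_p$, which holds because $q \le p$ in Definition \ref{def2}. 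Your proposal never checks this, and it is not a formality: it encodes exactly the parameter restrictions under which these classes are nontrivial.

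The gap resurfaces in your ``explicit verification.'' Rank-one containment does \emph{not} follow from the isometric embedding $x \mapsto x\cdot e_1$ of Remark \ref{7} alone, because $\widehat{u^*\otimes x}\bigl((u_j)_{j=1}^\infty\bigr) = (u^*(u_j)\,x)_{j=1}^\infty$ is not supported in a single coordinate. The standard argument is: linear stability of the domain class applied to $u^*$ gives $(u^*(u_j))_{j=1}^\infty \in X(\mathbb{K})$; the scalar inclusion $X(\mathbb{K}) \overset{1}{\hookrightarrow} Y(\mathbb{K})$ moves this into $Y(\mathbb{K})$; and linear stability of the target class applied to $\lambda \mapsto \lambda x$ lands it in $Y(F)$ with the right norm estimate. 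Concretely, for $\mathcal{W}^{A}_{(s,q,r;p)}$ the sequence $(\lambda_j x)_{j=1}^\infty$ with $(\lambda_j)_{j=1}^\infty \in \ell_p$ lies in $\ell^{A}_{(s,q,r)}(X)$ only because $\ell_p \subseteq \ell_q$, i.e.\ only because $p\le q$. So the missing scalar inclusion is exactly the missing step in your rank-one argument. The remaining ingredients you list (the ideal property via $\widehat{S\circ T\circ R} = \widehat{S}\circ\widehat{T}\circ\widehat{R}$, the domination $\|T\| \le w^{A}_{(s,q,r;p)}(T)$ via Proposition \ref{2}, and the completeness argument using finite determination of the target class) are fine, but to repair the proof you must add the scalar-level verification above, after which citing \cite[Theorem 3.6]{G.R} closes the argument exactly as the paper does.
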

\begin{proof}
	From Theorem \ref{TDR}, we have $\ell_{q} = \ell^{A}_{(s,q,r)}(\mathbb{K})$ and, in the case where $p \leq q$, we get
	$$\ell_{p}^{w}(\mathbb{K}) = \ell_{p} \overset{1}{\hookrightarrow} \ell_{q} = \ell^{A}_{(s, q,r)}(\mathbb{K}).$$
	On the other hand, in the case $q \leq p$, it follows that 
	$$\ell^{A}_{(s,q,r)}(\mathbb{K})=\ell_ {q} \stackrel{\textrm{1}}{\hookrightarrow} \ell_{p}(\mathbb{K}).$$
	Also, by Proposition \ref{SeqClass} and \cite[Example 3.3]{G.R}, $\ell^{A}_{(s,q,r)}(\cdot)$, $\ell^{w} _ {p}(\cdot)$ and $\ell_{p}(\cdot)$ are linearly stable sequence classes. So it follows from \cite [Theorem 3.6]{G.R} that $\left(\mathcal{W}^{A}_{(s,q,r; p )}; w^{A} _{(s, q ,r;p)}(\cdot)\right)$ and $\left( \Pi^{A}_{(p;s,q,r)};\pi^{A}_{(p;s,q,r)}(\cdot)\right)$ are Banach ideals.
\end{proof}

Recall that an operator ideal $\mathcal{ I }$ is \textit{injective} if $T \in\mathcal{ I }(U;X)$ whenever $v \in \mathcal{L}(X;Y)$ is a metric injection ($\|v(x)\| = \|x\|$ for every $x \in X$) such that $v \circ T \in \mathcal{ I }(U;Y)$.

\begin{proposition}
	The ideals $\mathcal{W}^{A}_{(s,q,r;p)}$ and $\Pi^{A}_{(p;s,q,r)}$ are injective.
\end{proposition}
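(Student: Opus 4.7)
The plan is to verify injectivity separately for each ideal, using the finite sequential characterizations (Propositions \ref{pq}(vi) and \ref{R12}(iii)). In both cases let $v \in \mathcal{L}(X;Y)$ be a metric injection and suppose $v\circ T \in \mathcal{I}(U;Y)$, where $\mathcal{I}$ is the ideal at hand; the goal is to bound the relevant expression for $T$ by the ideal norm of $v\circ T$ times the data norms, which will give $T \in \mathcal{I}(U;X)$ with $\|T\|_{\mathcal{I}} \le \|v\circ T\|_{\mathcal{I}}$.

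For $\Pi^{A}_{(p;s,q,r)}$ the argument is immediate: since $v$ is a metric injection, $\|T(u_j)\| = \|v(T(u_j))\|$ for each $j$, hence
\begin{equation*}
\|(T(u_j))_{j=1}^m\|_p = \|(v(T(u_j)))_{j=1}^m\|_p \le \pi^{A}_{(p;s,q,r)}(v\circ T)\,\|(u_j)_{j=1}^m\|_{A(s,q,r)},
\end{equation*}
and Proposition \ref{R12}(iii) concludes.

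For $\mathcal{W}^{A}_{(s,q,r;p)}$ the main step is an extension argument. Given $x^{*}_{1},\ldots,x^{*}_{n} \in X^{*}$, each functional $x^{*}_{k}$ factors through $v$ on $v(X)$ (because $v$ is injective) as an isometric functional on $v(X)$; by Hahn--Banach it extends to $y^{*}_{k} \in Y^{*}$ with $\|y^{*}_{k}\| = \|x^{*}_{k}\|$ and $y^{*}_{k}\circ v = x^{*}_{k}$. Consequently $(y^{*}_{k})_{k=1}^{n} \in \ell_{r}(Y^{*})$ with $\|(y^{*}_{k})_{k=1}^{n}\|_{r} = \|(x^{*}_{k})_{k=1}^{n}\|_{r}$. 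Applying the characterization (vi) of Proposition \ref{pq} to $v\circ T$ yields, for every $m, n \in \mathbb{N}$ and $u_{1},\ldots,u_{m} \in U$,
\begin{equation*}
\left(\sum_{j=1}^{m}\left(\sum_{k=1}^{n}|x^{*}_{k}(T(u_{j}))|^{s}\right)^{q/s}\right)^{1/q} = \left(\sum_{j=1}^{m}\left(\sum_{k=1}^{n}|y^{*}_{k}(v\circ T(u_{j}))|^{s}\right)^{q/s}\right)^{1/q},
\end{equation*}
which is bounded by $w^{A}_{(s,q,r;p)}(v\circ T)\|(x^{*}_{k})_{k=1}^{n}\|_{r}\|(u_{j})_{j=1}^{m}\|_{w,p}$. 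Invoking Proposition \ref{pq} in the reverse direction gives $T \in \mathcal{W}^{A}_{(s,q,r;p)}(U;X)$ with $w^{A}_{(s,q,r;p)}(T)\le w^{A}_{(s,q,r;p)}(v\circ T)$.

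No step is really an obstacle here; the only point demanding a moment of care is ensuring that the Hahn--Banach extension preserves the $\ell_{r}(Y^{*})$-norm of the test sequence, which is guaranteed by the pointwise norm equality $\|y^{*}_{k}\|=\|x^{*}_{k}\|$. Everything else is a transport of inequalities along $v$, made possible precisely because the defining inequalities of both ideals are stated in terms of the finite sequential norms supplied by Propositions \ref{pq} and \ref{R12}.
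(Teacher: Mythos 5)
Your proposal is correct and follows essentially the same route as the paper: for $\Pi^{A}_{(p;s,q,r)}$ the pointwise identity $\|T(u_j)\|=\|v(T(u_j))\|$ does all the work, and for $\mathcal{W}^{A}_{(s,q,r;p)}$ you define the induced functionals on $v(X)$ and extend them by Hahn--Banach to $y^{*}_{k}\in Y^{*}$ with $\|y^{*}_{k}\|=\|x^{*}_{k}\|$ (which is exactly where the metric-injection hypothesis is used), then transport the defining inequality. The only cosmetic difference is that you work with the finite characterizations while the paper writes the infinite sums directly.
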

\begin{proof}
	An immediate calculus proves the injective property of the ideal $\Pi^{A}_{(p;s,q,r)}$ just using that $\|v \circ T (u_{j})\|=\| T (u_{j})\|$, all $j \in  \mathbb{N}$ and all $(u_{j})_{j=1}^{\infty} \in \ell^{A} _{(s,q,r)}(U)$, and the definition of the norm $\|\cdot\|_p$.
	
	Let us suppose that $v  \circ T \in \mathcal{W}^{A}_{(s,q,r;p)}(U;Y)$. Consider  $(x^*_k)_{k=1}^\infty \in \ell_r(X^*)$ and, for each $k \in \mathbb{N}$, let
	$$
	\begin{array}{cccc}
		z^{*}_{k} \ : & \! Im(v) & \! \longrightarrow
		& \! \mathbb{K} \\
		& \! v(x) & \! \longmapsto
		& \! z^{*}_{k}(v(x)) = x^{*}_{k}(x)
	\end{array}
	$$         
	The injectivity of $v$ ensures the well-definition of $z^{*}_{k}$ and as $x^{*}_{k}$ and $v$ are linear and continuous, we have $z^{*}_{k} \in (Im(v))^{*}$. So, by the Hahn-Banach Theorem, there is $y^{*}_{k} \in Y^{*}$ such that $y_{k}^{*}|_{Im(v)} = z^{*}_{k}$ and $\|y_{k}^{*} \|= \|z_{k}^{*}\|$.  Furthermore, 
	$$\|y^{*}_{k}\|= \|z^{*}_{k}\| = \sup_{y \in B_{Im(v)}}|z^{*}_{k}(y)| = \sup_{\|v(x)\| \leq 1}|z^{*}_{k}(v(x))| =\sup_{x \in B_{X}}|x^{*}_{k}(x)| = \|x^{*}_{k}\|$$
	and so $(y^*_k)_{k=1}^\infty \in \ell_r(Y^*)$. As $v\circ T\in\mathcal{W}^{A}_{(s,q,r;p)}(U;Y)$, it follows that
	\begin{align*}
		\left(\sum_{j=1}^\infty \left(\sum_{k=1}^\infty |x^*_{k}(Tu_{j})|^{s}\right)^%
		{q/s} \right)^%
		{1/q}
		& = \left(\sum_{j=1}^\infty \left(\sum_{k=1}^\infty |z^*_{k}( v(Tu_{j}))|^{s}\right)^%
		{q/s} \right)^%
		{1/q}  \\
		& = \left(\sum_{j=1}^\infty \left(\sum_{k=1}^\infty |y^*_{k}(v(Tu_{j}))|^{s}\right)^%
		{q/s} \right)^%
		{1/q}\\
		& \leq  w^{A}_{(s,q,r;p)}(v\circ T) \left\| (u_j)_{j=1}^\infty  \right\|_{w,p} \left\| (y^*_k)_{k=1}^\infty \right\|_r 
	\end{align*}
	and we conclude that  $T\in\mathcal{W}^{A}_{(s,q,r;p)}(U;X)$.	
\end{proof}

\bigskip

\noindent Departamento de Ci\^{e}ncias Exatas\\
Universidade Federal da Para\'iba\\
58.297-000 -- Rio Tinto -- Brazil\\
e-mail: jamilson@dcx.ufpb.br and jamilsonrc@gmail.com\\

\noindent Departamento de Matem\'atica\\
Universidade Federal da Para\'iba\\
58.051-900 -- Jo\~ao Pessoa -- Brazil\\
e-mail: renato.burity@academico.ufpb.br\\

\noindent Departamento de Matem\'atica\\
Universidade Federal da Para\'iba\\
58.051-900 -- Jo\~ao Pessoa -- Brazil\\
e-mail: joedson.santos@academico.ufpb.br
\end{document}